\documentclass[12pt]{amsart}
\usepackage{amsmath,amssymb,amsthm}
\usepackage{setspace}
\usepackage{moreverb}
\usepackage{color}
\usepackage{amscd}

\oddsidemargin=0cm \evensidemargin=0cm
%\topmargin=.6truecm
%\topskip=1truecm
%\parskip=6.5pt
\textwidth 160mm \textheight 210mm

\newcommand{\ignore}[1]{}

\newtheorem{thm}{Theorem}[section]
\newtheorem{lemma}[thm]{Lemma}
\newtheorem{prop}[thm]{Proposition}
\newtheorem{cor}[thm]{Corollary}

\theoremstyle{definition}
\newtheorem{nota}[thm]{Notation}
\newtheorem{defn}[thm]{Definition}
\newtheorem{prop-def}[thm]{Proposition-Definition}
\newtheorem{example}[thm]{Example}
\newtheorem{rmk}[thm]{Remark}

\numberwithin{equation}{section}

%    Absolute value notation

%    Blank box placeholder for figures (to avoid requiring any
%    particular graphics capabilities for printing this document).

\input xy
\xyoption{all}

\newcommand{\QQ}{\mathbb Q}

%\newcommand{\I}{{I_{\alpha_1}\cap \dots \cap I_{\alpha_m}}}

%\newcommand{\E}[1]{\mbox{E}_{R}(R/#1)}

%\renewcommand{\labelenumi}{\roman{enumi})}

%%%%%%%%%%%%%%%%%%%%%%%%%%%%%%%%%%%%%%%%%%%%%%%%%%%%%%%%%%%%%%%%%%%%%%%%%%%
%%%% local definitions for this paper
%%%%%%%%%%%%%%%%%%%%%%%%%%%%%%%%%%%%%%%%%%%%%%%%%%%%%%%%%%%%%%%%%%%%%%%%%%%
                   %% maximal ideal
                   %% residual field

             %% Rees algebra

%\newcommand{\Proj}{\operatorname{Proj}}
%\newcommand{\Spec}{\operatorname{Spec}}

\newcommand{\Z}{\mathbb{Z}}

\newcommand{\Oc}{\mathcal{O}}

\newcommand{\J}{\mathcal{J}}

\renewcommand{\geq}{\geqslant}
\renewcommand{\leq}{\leqslant}

\newcommand{\M}{{\mathfrak m}}
\usepackage{ mathrsfs }
\newcommand{\D}{\mathscr{D}}

\newcommand{\ak}{\mathfrak a}
\DeclareMathOperator{\Pic}{Pic}
\DeclareMathOperator{\Spec}{Spec}
\DeclareMathOperator{\lct}{lct}

\begin{document}
 
\title[]{Computing jumping numbers in higher dimensions}

\author[H. Baumers]{Hans Baumers}

\author[F. Dachs-Cadefau]{Ferran Dachs-Cadefau}

\address{KU Leuven\\ Department of Mathematics \\ Celestijnenlaan 200B box 2400\\
BE-3001 Leuven, Belgium}

\email{Hans.Baumers@wis.kuleuven.be, Ferran.DachsCadefau@wis.kuleuven.be}

\thanks{The first author is supported by a PhD fellowship of the Research Foundation - Flanders (FWO). The second author was partially supported by Generalitat de Catalunya SGR2014-634 project,
Spanish Ministerio de Econom\'ia y Competitividad
MTM2015-69135-P and by the KU Leuven grant OT/11/069.}

\begin{abstract}
%With this paper we try to generalize the results of [AAD14] to varieties satisfying that $\dim X\geqslant 3$. For that, we introduce the notion of $\pi$-antieffective divisors, a notion that is a generalization of antinef divisors via global sections. Thanks to the properties of these divisors, we are able to present a way to find a subset of the candidate jumping numbers containing the jumping numbers. In some particular cases this set describes explicitly the jumping numbers.
The aim of this paper is to generalize the algorithm to compute jumping numbers on rational surfaces described in \cite{AAD14} to varieties of dimension at least 3. Therefore, we introduce the notion of $\pi$-antieffective divisors, generalizing antinef divisors. Using these divisors, we present a way to find a small subset of the `classical' candidate jumping numbers of an ideal, containing all the jumping numbers. Moreover, many of these numbers are automatically jumping numbers, and in many other cases, it can be easily checked. %In several cases it can be easily checked whether they are actual jumping numbers.
\end{abstract}

\maketitle

\section{Introduction}
To an ideal sheaf $\ak$ on a smooth algebraic variety $X$, one can associate its multiplier ideals $\mathcal J(X,\ak^c)$, where $c\in\QQ_{\geq0}$. They form a nested family of ideals in $\Oc_X$, which decreases when $c$ increases. The values of $c$ where the ideal changes are called the \emph{jumping numbers} of the pair $(X,\ak)$. They are very interesting geometric invariants, that were studied in \cite{ELSV04}, but also appeared earlier in \cite{Lib83}, \cite{LV90}, \cite{Vaq92} and \cite{Vaq94}. %(same citations as in Tucker's introduction?)
The jumping numbers determine in some sense how bad a singularity is. For example, if $\ak$ is the ideal corresponding to a smooth hypersurface, then the jumping numbers are just the positive integers. When the ideal represents a more singular variety - or when it takes more blow-ups to obtain a log resolution - the jumping numbers are in general smaller and more numerous.

The smallest jumping number is called the \emph{log canonical threshold}. Koll\'ar (see \cite{Kol97}) proved that, if $\ak$ is a principal ideal, it corresponds to the smallest root of the Bernstein-Sato polynomial. Ein, Lazarsfeld, Smith and Varolin (see \cite{ELSV04}) generalized this result for all jumping numbers in the interval (0,1]. %\textcolor{red}{(at least if $\ak$ is principal)}.

We are interested in ways to compute jumping numbers. For monomial ideals, Howald (\cite{How01}) showed a combinatoric description of the multiplier ideal, which also allows to determine the jumping numbers.
In \cite{Tuc10}, Tucker presents an algorithm to compute jumping numbers on surfaces with rational singularities. Alberich-Carrami\~nana, \`Alvarez Montaner and the second author (\cite{AAD14}) introduce another algorithm in that setting. Shibuta (\cite{Shi11}) constructed an algorithm to compute multiplier ideals and jumping numbers in arbitrary dimensions using $\mathcal D$-modules, which was simplified by Berkesch and Leykin (\cite{BL10}).

In this paper, we present an algorithm that can be used for computing jumping numbers in arbitrary dimensions, based on the algorithm in \cite{AAD14}. The idea is to start with computing the so-called \emph{supercandidates}, and then checking whether they are jumping numbers. The supercandidates can be computed in arbitrary dimensions, as long as we have enough understanding of the Picard groups of the exceptional divisors in a chosen resolution of $\ak$. For checking that they are jumping numbers, we give some possible criteria. Although we do not present a technique that works in full generality, we are able to compute jumping numbers of ideals where previous algorithms seemed to be insufficient (or got stuck while computing them).%took more time than we wanted the computer to calculate).

In Section \ref{sec:preliminaries} we introduce the basics on multiplier ideals and jumping numbers, together with some elementary results that we need. We also recall some of the notions introduced by Tucker in \cite{Tuc10}. % the notion of contribution of a jumping number by an exceptional divisor.

In Section \ref{sec:unloading}, we define the $\pi$-antieffective divisors, which are a generalization of the notion of antinef divisors. We also construct a method to compute the $\pi$-antieffective closure of a divisor, generalizing the unloading procedure presented in \cite{Lau72} and \cite{Enr15}. Lipman's correspondence between antinef divisors and integrally closed ideals (see \cite{Lip69}) does not hold anymore in higher dimensions, but we present a weaker alternative in Section \ref{ssec:correspondence}.

Section \ref{sec:algorithm} contains the core of the paper. Here we present our algorithm to compute the \emph{supercandidates}, and ways to check whether they are jumping numbers.

Finally, in Section \ref{sec:examples} we present some illustrative examples.

\section{Preliminaries}\label{sec:preliminaries}
Through this section, let $X$ be a smooth algebraic variety with $\dim X=n\geqslant 2$ over an algebraically closed field $k$ of characteristic zero. %Let $D$ be an effective divisor on $X$. We define a \emph{log resolution} of the pair $(X,D)$ as a birational morphism $\pi: Y\to X$, such that
%\begin{itemize}
% \item $Y$ is smooth, and
% \item $\pi^{*}D+\text{Exc}(\pi)$ is a simple normal crossing divisor.
%\end{itemize}
%Similarly, if $\ak$ is a sheaf of ideals on $X$, we define a \emph{log resolution} of the pair $(X,\ak)$ as a birational morphism $\pi: Y\to X$, such that
Let $\ak$ be a sheaf of ideals on $X$. We define a \emph{log resolution} of the pair $(X,\ak)$ as a birational morphism $\pi:Y\to X$, such that

\begin{itemize}
 \item $Y$ is smooth,
 \item the pre-image of $\ak$ is locally principal, i.e., $\ak \cdot \Oc_{Y}=\Oc_{Y}(-F)$ for some effective Cartier divisor $F$, and 
 \item $F+\text{Exc}(\pi)$ is a simple normal crossing divisor.
\end{itemize}
By Hironaka's resolution of singularities (see \cite{Hir64}), log resolutions always exist.%For any pair $(X,D)$ or $(X,\ak)$, this log resolution always exists (see \cite{Hir69}).
Note that if $\ak=\Oc_X(-D)$ for an effective divisor $D$, then $F=\pi^*D$, and $\pi$ will be called a log resolution of $(X,D)$ instead of $(X,\Oc_X(-D))$.

Given a birational morphism $\pi: Y\to X$, the relative canonical divisor measures in some way the difference between $X$ and $Y$.
\begin{defn}
Let $\pi:Y\to X$ be a birational morphism of smooth varieties, the \emph{relative canonical divisor} of $\pi$ is the divisor class
\[K_\pi := K_{Y} - \pi^*K_X\,.\]
%This divisor is supported on the exceptional locus $\text{Exc}(\pi)$.
\end{defn}
It is important to notice that, even though $K_X$ and $K_Y$ are only defined as divisor classes, the relative canonical divisor can be chosen to be an effective divisor, supported on the exceptional locus of $\pi$. Indeed, if $X$ and $Y$ are smooth, there is a unique way to write \[K_\pi = \sum k_i E_i\,,\] with $k_i\in \Z_{\geq 0}$, where the $E_i$ are the irreducible components of $\text{Exc}(\pi)$.
%It is well known that if $X$ and $Y$ are smooth, there is a unique way to write $K_\pi$ as a $\Z$-linear combination of $\pi$-exceptional divisors with positive coefficients, i.e., \[K_\pi = \sum k_i E_i\,,\] where $k_i\in \Z_{\geqslant 0}$ and $E_i$ are the irreducible components of $\text{Exc}(\pi)$. It is important to notice even though $K_Y$ and $K_X$ are defined as equivalence classes of divisors, $K_\pi$ is not defined as that, it is defined as effective divisor supported on the exceptional locus $\text{Exc}(\pi)$.

% 
% \subsection{$\QQ$-Divisors}

Another notion that we need to introduce is $\QQ$-divisors.

\begin{defn}
A \emph{$\QQ$-divisor} $D$ on an algebraic variety $Y$ is a formal finite sum $D=\sum a_i D_i$, where the $D_i$ are irreducible codimension one subvarieties of $Y$, and $a_i\in \QQ$.
\end{defn}

For any $\QQ$-divisor $D=\sum a_iD_i$, one denotes its \emph{round-down} and \emph{round-up} as
\[\lfloor D\rfloor = \sum \lfloor a_i \rfloor D_i\,\text{\hspace{1cm} and \hspace{1cm}}\lceil D \rceil = \sum \lceil a_i \rceil D_i\,,\] respectively.

\subsection{Multiplier Ideals}
Having introduced these basic notions, we define multiplier ideals.%, both for a divisor and for an ideal. Notice that the divisor setting is actually a special case of the ideal setting. % We'll give two definitions, the first one in the case of a divisor, and the second one in the case of an ideal.

%\begin{defn}
%Let $D$ be an effective divisor on $X$ and $\pi:Y\to X$ a log resolution of $(X,D)$.
%For $c \in \QQ_{\geq0}$, we define the \emph{multiplier ideal of $D$ with coefficient $c$} as
%\[\mathcal J(X,c D) := \pi_*\Oc_{Y}\left(K_\pi - \lfloor c \pi^*D\rfloor\right)\,,\]
%where $K_\pi$ is the relative canonical divisor.
%\end{defn}

\begin{defn}
Let $\ak$ be a sheaf of ideals on $X$, $\pi:Y\to X$ a log resolution of $\ak$, and $F$ the divisor satisfying $\ak\cdot\Oc_{Y}= \Oc_{Y}(-F)$.
For $c \in \QQ_{\geq0}$, we define the \emph{multiplier ideal associated to $\ak$ with coefficient $c$} as
\[\mathcal J(X,\ak^c) := \pi_*\Oc_{Y}\left(K_\pi - \lfloor c F\rfloor\right),\]
where $K_\pi$ is the relative canonical divisor.
\end{defn}

If $\ak=\Oc_X(-D)$ for an effective divisor $D$ on $X$, we will denote the multiplier ideals by $\mathcal J(X,cD)$.
For simplicity, if no confusion can arise, we will simply write $\mathcal J(\ak^c)$ or $\mathcal J(c D)$.

\begin{rmk}
 It is clear from the definition of $K_\pi$ that $\pi_*\Oc_Y(K_\pi)=\Oc_X$, and therefore for any effective divisor $N$, we have \[\pi_*\Oc_Y(K_\pi-N)\subseteq \Oc_X\,.\] It is due to this fact that $\J(\ak^c)\subseteq \Oc_X$ are subsheaves of $\Oc_X$, which justifies the name \emph{multiplier ideal}.
\end{rmk}

%Note that $\J(X,c D)$ and $\J(X, \ak^c)$ are subsheaves of $\Oc_X$.
It is easy to see that $\J(\ak^c)\subseteq \J(\ak^{c'})$ if $c\geq c'$, and that $\J(\ak^{(c +\varepsilon)})=\J(\ak^c)$ if $0\leq\varepsilon\ll 1$. This yields the following result.
\begin{prop-def}
Let $\ak$ be an ideal sheaf on $X$. There exists an increasing sequence of rational numbers
\[0=\lambda_0<\lambda_1<\lambda_2<\lambda_3<\dots\]
satisfying
\begin{itemize}
\setlength\itemsep{0.1em}
\item $\J(\ak^{\lambda_i})\varsupsetneq \J(\ak^{\lambda_{i+1}})$ for $i \in \mathbb N$,
\item $\J(\ak^c) = \J(\ak^{\lambda_i})$ for $c\in [\lambda_i,\lambda_{i+1})$.
\end{itemize}
The numbers $\lambda_i$, $i>0$,  are called the \emph{jumping numbers} of $\ak$.
\end{prop-def}
The jumping numbers of a divisor $D$ are defined analogously. The smallest jumping number is called the \emph{log canonical threshold} of $\ak$ or $D$, and is denoted by $\lct(X,\ak)$ or $\lct(X,D)$, respectively. This is a very important invariant of the pair $(X,\ak)$ or $(X,D)$, that appears in different branches of algebraic geometry. For a nice overview, we refer to \cite{Kol97}.

\vspace{3mm}

%Consider a pair $(X,D)$ or $(X,\ak)$ and fix a log resolution $\pi: Y\to X$. We denote by $E_i$ the pairwise different prime divisors (not necessarily exceptional). Among them, consider the $r$ exceptional ones: for simplicity, we will denote them by $E_1,...,E_r$. Any divisor on $Y$ can be decomposed in two parts: the exceptional and the affine. In our case, if we consider the divisor on $Y$ associated to the pair $(X,D)$, one has: $\pi^*D = D_{aff} + \sum_{i\in I} e_i E_i$ with $e_i\in\Z_{>0}$. On the other hand, if we consider the pair $(X,\ak)$, we associate  to them a divisor $F$ as $\ak\cdot \Oc_{Y}=\Oc_{Y}(-F)$, divisor that we decompose as $F = F_{aff} + \sum_{i\in I} e_i E_i$ with $e_i\in\Z_{>0}$. From now on, for simplicity, we will write $F$ as the divisor on $Y$ associated either to $(X,D)$ or $(X,\ak)$. \textcolor{red}{What's the use of this paragraph? I'm not very happy with it.}

Now we fix some notations. For a pair $(X,\ak)$, we fix a log resolution $\pi:Y\to X$. We denote by $F$ the divisor satisfying $\ak\cdot\Oc_Y=\Oc_Y(-F)$. The irreducible components of $F$ are denoted $E_i$, $i\in I$, and we write \[F=\sum_{i\in I} e_iE_i\,,\] where $e_i\in\Z_{>0}$. The divisor $F$ has an exceptional part and a non-exceptional part, also called the \emph{affine} part. The affine part is sometimes denoted $F_{aff}$, and the exceptional components are denoted $E_1,\dots,E_r$. So we also have \[F=F_{aff} + \sum_{i=1}^r e_i E_i\,.\]
Note that $F_{aff} = 0$ whenever the support of $\ak$ has codimension at least 2.

\vspace{3mm}

Let $\ak\subseteq \Oc_X$ be an ideal on $X$, $D$ be an effective divisor on $X$, $F$ the divisor on $Y$ defined as before and $c$ a positive rational number. The multiplier ideals associated to $\ak$ or $D$ and $c$ satisfy the following properties (see \cite{LazII04}).

\begin{itemize}
 \item The definition of multiplier ideal does not depend on the resolution (Esnault-Viehweg in \cite{EV92}).
 \item (Local Vanishing) $R^i\pi_*\Oc_{Y}(K_{\pi}-\lfloor cF\rfloor)=0$ for all $i>0$ and $c>0$.
 \item We have that $\J(\ak^c)$ is integrally closed for all $c>0$.
 \item The integers are jumping numbers for the pair $(X,D)$.
 \item For $c>0$, we have that
 \begin{align*}
  \J((c+1) D) 	&= \pi_*\Oc_{Y}(K_\pi-\lfloor c \pi^* D\rfloor - \pi^* D)\\
			&= \J(c D)\otimes_{\Oc_X} \Oc_X(-D).
 \end{align*}
It follows that $\lambda>0$ is a jumping number if and only if $\lambda +1$ is a jumping number.
 \item (Skoda's theorem) If $m\in\mathbb N$ with $m\geqslant n$, then
\[\J(\ak^m) = \ak\J(\ak^{m-1})\,.\]
Therefore, for any $\lambda \geq n$, one has that $\lambda$ is a jumping number if and only if $\lambda-1$ is a jumping number.
 \item From the proof of Skoda's theorem, one can actually deduce a stronger result. If $\ak$ is an ideal generated by $\ell$ elements and $m\geqslant \ell$, then
\[\J(\ak^m) = \ak\J(\ak^{m-1})\,.\]
Therefore, for any $\lambda \geq \ell$, one has that $\lambda$ is a jumping number if and only if $\lambda-1$ is a jumping number.
\end{itemize}

\begin{rmk}
 Lipman and Watanabe (see \cite{LW03}), and independently Favre and Jonsson (see \cite{FJ05}), proved that every integrally closed ideal in a two-dimensional regular local ring is a multiplier ideal. However, this is no longer true in higher dimensions. Lazarsfeld and Lee showed in \cite{LL07} that if $\dim X\geqslant 3$, integrally closed ideals need to satisfy certain conditions in order to be realized as multiplier ideals. The conditions allow them to give examples of integrally closed ideals that cannot be realized as multiplier ideals.
\end{rmk}

\subsection{Contributing divisors}

One can easily see from the definition of multiplier ideals that, with the notations above, the jumping numbers are contained in the set
\[\left\{\left.\frac{k_i+n}{e_i}\right| i\in I, n\in \Z_{>0}\right\}.\]
These numbers are the \emph{candidate jumping numbers}. It is important to notice that the smallest candidate is always a jumping number, and hence it equals the log canonical threshold. So we have \[\lct(X,\ak)=\min\left\{\left.\frac{k_i+1}{e_i}\right|i\in I\right\},\] and similar for a divisor $D$.

\vskip 3mm

Furthermore, if $E_i$ is not exceptional, then $k_i=0$ and the candidates \[\left\{\left.\frac{m}{e_i} \hskip 2mm \right| \hskip 2mm m\in \Z_{>0}  \right\}\] are always jumping numbers. For the exceptional ones, in general many candidate jumping numbers are not a jumping number. %It is easy to find examples. (See the examples below.)

\begin{defn}
Let $G$ be a reduced divisor supported on the exceptional part of $\pi$ and $\lambda$ a positive rational number. We will say that $\lambda$ is a \emph{candidate jumping number for $G$} if and only if $\lambda$ can be expressed as $\frac{k_i+n_i}{e_i}$ for each $E_i\leq G$ with $n_i\in \Z_{>0}$.
\end{defn}

A notion that is stronger and more interesting than being a candidate for a divisor, is being contributed by a divisor. This notion was introduced by Smith and Thompson in \cite{ST07}, and developed further by Tucker in \cite{Tuc10}.
%Another notion related with the jumping numbers that will be useful is the notion of contribution. Tucker in \cite{Tuc10} developed the notion of {\it divisor that contributes} to a jumping number, after the notion introduced by Smith and Thompson in \cite{ST07}.

\begin{defn}\cite[Definition 3.1]{Tuc10}
Let $G$ be a reduced divisor supported on the exceptional part of $\pi$, and $\lambda$ a candidate jumping number for $G$. We say that $G$ \emph{contributes} $\lambda$ as a jumping number if
\[ \pi_*\Oc_Y( K_\pi -\lfloor\lambda F\rfloor+G)\varsupsetneq \mathcal J(X,\ak^\lambda)\,.\]
We will say that this contribution is \emph{critical} if moreover for any non-zero divisor $G'< G $ one has
\[ \pi_*\Oc_Y( K_\pi -\lfloor\lambda F\rfloor+G')= \mathcal J(X,\ak^\lambda)\,.\]
\end{defn}

As an illustration of these concepts, we consider some examples in dimension two.

\begin{example}
Let $X$ be the affine plane and $D=\{y^2=x^3\}$. Let $\pi:Y\to X$ be the minimal log resolution of $D$, and $E_1$, $E_2$ and $E_3$ the exceptional divisors. Then we have $\pi^*D = D_{aff} + 2E_1+3E_2+6E_3$, where $D_{aff}$ is the strict transform of $D$. Moreover, we have that $K_\pi = E_1+2E_2+4E_3$, so the candidate jumping numbers are
\[\left\{\left.\frac{0+n}{1},\frac{1+n}{2}, \frac{2+n}{3}, \frac{4+n}{6} \right| n\in \Z_{>0}\right\}\,.\]
Hence \[\lct(X,D)=\frac{5}{6}\] is the smallest jumping number. Moreover, since we are in the case of a divisor, the integers are always jumping numbers, and the jumping numbers are periodic. Then one concludes that \[\left\{\frac{5}{6},1,\frac{11}{6},2,\frac{17}{6},3,\dots\right\}\] is the set of jumping numbers. Clearly, all jumping numbers of the form $\frac 56+m$ for $m\in\mathbb N$ are contributed by $E_3$, and all integers are contributed by $D_{aff}$.
\end{example}

% \begin{example}
% Let $X$ be the affine plane again and consider the ideal $\ak = (y^2,x^3)\subseteq\Oc_X$. Let $\pi$ be its log resolution defining a divisor $F$ as $\ak\cdot \Oc_{Y}=\Oc_{Y}(-F)$. Then we have $F=2E_1+3E_2+6E_3$ and $K_\pi = E_1+2E_2+4E_3$. Using some well known results, one can see that $\lct(X,\ak)=\frac{5}{6}$. If we want to compute all the jumping numbers in this case, we use the algorithm presented in \cite{AAD14}. From there one can see that the set of jumping numbers associated to $\ak$ is \[\left\{\left.\frac{n}{6}\right| n\geq 5\right\}\,.\] 
% \end{example}

\begin{example}\cite[Example 3.9]{AAD14}
Let $X$ be the affine plane again and consider the ideal $\ak = (x^2y^2,x^5,y^5,xy^4,x^4y)\subseteq\Oc_X$. Let $\pi:Y\to X$ be its minimal log resolution. Then $K_\pi = E_1+2E_2+4E_3+2E_4+4E_5$, and $\ak\cdot \Oc_{Y}=\Oc_{Y}(-F)$, where $F=4E_1+5E_2+10E_3+5E_4+10E_5$. %defining a divisor $F$ as $\ak\cdot \Oc_{Y}=\Oc_{Y}(-F)$. Then we have $F=4E_1+5E_2+10E_3+5E_4+10E_5$ and $K_\pi = E_1+2E_2+4E_3+2E_4+4E_5$. 
Using the fact that the log canonical threshold is the minimal candidate jumping number, one can see that $\lct(X,\ak)=\frac{1}{2}$. In order to compute all the jumping numbers in this case, we use the algorithm presented in \cite{AAD14}. This yields that the set of jumping numbers associated to $\ak$ is \[\left\{\left.\frac{1}{2}, \frac{7}{10}, \frac{n}{10}\right| n\geq 9\right\}\,.\] 
We can see that $\frac{7}{10}$ is contributed by $E_3+E_5$. However, since it is (critically) contributed by $E_3$ and also by $E_5$, $\frac{7}{10}$ is not critically contributed by $E_3+E_5$.
\end{example}

The following result is a nice characterization of contribution which will be used in the following sections. It appears in \cite{ST07} and \cite[Proposition 4.1]{Tuc10} for surfaces, but it holds in a more general setting. We repeat it here for completeness. First, we introduce a notation.
%In dimension 2, Tucker presents a criterion to detect the critical contribution.

\begin{nota}
If $E$ is a subscheme of a scheme $Y$, $\iota:E\to Y$ is the embedding, and $\mathcal F$ is a sheaf of $\Oc_Y$-modules, then we denote by $\mathcal F|_E$ the $\Oc_E$-module $\iota^*\mathcal F$. Sometimes, if $\mathcal G$ is a sheaf of $\Oc_E$-modules, we will consider it as a sheaf on $Y$ by simply writing $\mathcal G$ instead of $\iota_*\mathcal G$.
\end{nota}

\begin{prop}\label{prop:contribution_globalsections}
Suppose $\lambda$ is a candidate jumping number for the reduced divisor $G$. Suppose that $G$ is mapped onto an affine subscheme of $X$.  Then $\lambda$ is realized as a jumping number for $(X,D)$ or $(X,\ak)$ contributed by $G$ if and only if
\[H^0(G,\Oc_Y(K_\pi-\lfloor \lambda F\rfloor+G)|_G)\neq 0\,.\]
Furthermore, this contribution is critical if and only if we have
\[H^0(G',\Oc_Y(K_\pi-\lfloor \lambda F\rfloor + G')|_{G'})=0\] for all divisors $G'$ on $Y$ with $0\leq G' < G$.
%  Let $G$ be a reduced divisor supported on the exceptional part of $\pi$, and $\lambda$ a candidate jumping number for $G$ with connected exceptional support. Then
%\begin{itemize}
% \item If $G=E$ is irreducible, then $\lambda$ is (critically) contributed by $E$ if and only if
%\[(K_{\pi}-\lfloor\lambda F\rfloor+E)\cdot E\geqslant 0\]
% \item If $G$ is reducible, then $\lambda$ is critically contributed by $G$ if and only if
%\[(K_{\pi}-\lfloor\lambda F\rfloor+G)\cdot E= 0\]
%for all $E$ in the support of $G$.
%\end{itemize}
\end{prop}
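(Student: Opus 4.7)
The plan is to deduce the statement from a short exact sequence obtained by twisting the structure sequence of $G$ and then applying the local vanishing theorem already quoted in the preliminaries. Starting from
\[0\to \Oc_Y(-G)\to \Oc_Y\to \Oc_G\to 0,\]
I twist by the line bundle $\Oc_Y(K_\pi-\lfloor \lambda F\rfloor+G)$ to obtain
\[0\to \Oc_Y(K_\pi-\lfloor \lambda F\rfloor)\to \Oc_Y(K_\pi-\lfloor \lambda F\rfloor+G)\to \Oc_Y(K_\pi-\lfloor \lambda F\rfloor+G)|_G\to 0.\]
Pushing forward via $\pi$ and invoking local vanishing in the form $R^1\pi_*\Oc_Y(K_\pi-\lfloor \lambda F\rfloor)=0$, the associated long exact sequence collapses to a short exact sequence
\[0\to \J(\ak^\lambda)\to \pi_*\Oc_Y(K_\pi-\lfloor \lambda F\rfloor+G)\to \pi_*\bigl(\Oc_Y(K_\pi-\lfloor \lambda F\rfloor+G)|_G\bigr)\to 0.\]
Hence the strict inclusion $\pi_*\Oc_Y(K_\pi-\lfloor \lambda F\rfloor+G)\varsupsetneq \J(\ak^\lambda)$ defining contribution holds if and only if the rightmost sheaf is non-zero on $X$.

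To pass from this sheaf-theoretic non-vanishing to the claimed cohomological statement, the hypothesis that the image $Z:=\pi(G)$ is an affine subscheme of $X$ enters crucially. The sheaf $\pi_*(\Oc_Y(K_\pi-\lfloor \lambda F\rfloor+G)|_G)$ is quasi-coherent on $X$ and set-theoretically supported on $Z$; for quasi-coherent sheaves supported on an affine closed subscheme, being the zero sheaf is equivalent to having zero global sections. By the very definition of pushforward, these global sections equal $H^0(G,\Oc_Y(K_\pi-\lfloor \lambda F\rfloor+G)|_G)$. Combining this with the previous paragraph gives the first equivalence of the proposition.

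For the criticality statement, I run the identical argument with $G'$ in place of $G$, for any $0\leqslant G'<G$. The same twisted structure sequence together with local vanishing yields
\[0\to \J(\ak^\lambda)\to \pi_*\Oc_Y(K_\pi-\lfloor \lambda F\rfloor+G')\to \pi_*\bigl(\Oc_Y(K_\pi-\lfloor \lambda F\rfloor+G')|_{G'}\bigr)\to 0,\]
so $\pi_*\Oc_Y(K_\pi-\lfloor \lambda F\rfloor+G')=\J(\ak^\lambda)$ holds precisely when the pushforward of the restriction to $G'$ vanishes. Since $\pi(G')\subseteq Z$ is a closed subscheme of an affine scheme, it is itself affine, so the same sheaf-vs-global-sections dictionary identifies this vanishing with $H^0(G',\Oc_Y(K_\pi-\lfloor \lambda F\rfloor+G')|_{G'})=0$.

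The only delicate point is the use of the affineness hypothesis to translate vanishing of a quasi-coherent sheaf into vanishing of its global sections; without it the implication from sheaf-level non-contribution to a cohomological criterion on $G$ would fail. Everything else is a direct application of the ideal-sheaf sequence for $G$, local vanishing, and the functoriality of $\pi_*$, so no cohomological machinery beyond what has been cited in the preliminaries is needed.
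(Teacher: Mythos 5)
Your proposal follows the same route as the paper: twist the structure sequence of $G$ by the line bundle, push forward, invoke local vanishing to get the short exact sequence, and then use the affineness of $\pi(G)$ to convert non-vanishing of the pushforward sheaf into non-vanishing of global sections. One small point of wording: the relevant fact is not that the pushforward is quasi-coherent on $X$ and set-theoretically supported on $Z$, but rather that $\pi_*$ applied to a sheaf on $G$ factors as $\iota_{Z*}\circ\pi'_*$ through the affine scheme $Z=\pi(G)$, so that $\pi'_*\bigl(\Oc_Y(K_\pi-\lfloor \lambda F\rfloor+G)|_G\bigr)$ is a quasi-coherent $\Oc_Z$-module determined by its global sections (this is exactly the argument the paper spells out in the proof of Proposition~\ref{prop:eq_divisors}); with that clarification the proof is correct and essentially identical to the paper's.
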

\begin{proof}
Consider the exact sequence
\[0\to \Oc_Y(K_\pi-\lfloor \lambda F\rfloor) \to \Oc_Y(K_\pi-\lfloor\lambda F\rfloor +G) \to \Oc_Y(K_\pi-\lfloor \lambda F\rfloor+G)|_G\to 0\,.\]
After pushing forward through $\pi$, we get
\[0\to\mathcal J(X,\ak^\lambda)\to \pi_*\Oc_Y(K_\pi-\lfloor \lambda F\rfloor+G)\to \pi_*(\Oc_Y(K_\pi-\lfloor \lambda F\rfloor +G)|_G)\to 0\,,\]
since by local vanishing we have $R^1\pi_*\Oc_Y(K_\pi-\lfloor \lambda F\rfloor)=0$. So we see that $\lambda$ is a jumping number contributed by $G$ if and only if $\pi_*(\Oc_Y(K_\pi-\lfloor \lambda F\rfloor +G)|_G)\neq0$. Since $G$ is mapped onto an affine scheme, this is equivalent to $H^0(G,\Oc_Y(K_\pi-\lfloor\lambda F\rfloor+G)|_G)\neq 0$. The second statement follows immediately from the definition of critical contribution.
\end{proof}
\begin{rmk}
The condition that $G$ is mapped onto an affine subscheme of $X$ is a generalization of the two-dimensional case, where all the exceptional divisors are contracted to a point, and is also sufficient for our purposes, since we will only consider affine varieties $X$.
\end{rmk}

\begin{cor}[{\cite[Corollary 4.2]{Tuc10}}]\label{cor:connection_critical}
If $G$ critically contributes a jumping number $\lambda$, then $G$ is connected.
\end{cor}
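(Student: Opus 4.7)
The plan is to prove the contrapositive: if $G$ is disconnected, then $G$ cannot critically contribute $\lambda$. So suppose $G$ decomposes as a disjoint union $G = G_1 \sqcup G_2$ of non-zero reduced divisors with $G_1 \cap G_2 = \emptyset$. I would like to apply the criterion of Proposition~\ref{prop:contribution_globalsections} in this situation, so I first need to check that $\lambda$ remains a candidate jumping number for the proper subdivisor $G_1$ (and likewise $G_2$). This is immediate from the definition, since being a candidate for $G$ means $\lambda = (k_i+n_i)/e_i$ with $n_i \in \Z_{>0}$ for every component $E_i \leq G$, a condition that is inherited by any sub-sum of components.

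Next, since $G_1$ and $G_2$ are disjoint, the structure sheaf of $G$ splits as $\Oc_G = \Oc_{G_1} \oplus \Oc_{G_2}$, and the divisor $G_2$ restricts trivially to $G_1$ (and vice versa). Hence
\[
\Oc_Y(K_\pi - \lfloor \lambda F\rfloor + G)|_G \;=\; \Oc_Y(K_\pi - \lfloor \lambda F\rfloor + G_1)|_{G_1} \;\oplus\; \Oc_Y(K_\pi - \lfloor \lambda F\rfloor + G_2)|_{G_2},
\]
and taking global sections gives a corresponding direct sum decomposition. Because $G$ contributes $\lambda$, Proposition~\ref{prop:contribution_globalsections} says the left-hand space of global sections is non-zero, so at least one of the two summands, say $H^0(G_1, \Oc_Y(K_\pi - \lfloor \lambda F\rfloor + G_1)|_{G_1})$, is non-zero.

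Applying Proposition~\ref{prop:contribution_globalsections} in the reverse direction to $G_1$ (which is mapped onto an affine subscheme of $X$ since $G$ is), this means $G_1$ already contributes $\lambda$ as a jumping number. Since $0 < G_1 < G$, this directly contradicts the criticality of the contribution of $G$. I expect no serious obstacle here: the only conceptual point to be careful about is the sheaf-theoretic splitting over disjoint components, and the verification that the candidate and affineness hypotheses pass to $G_1$, both of which are routine.
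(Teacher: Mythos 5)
Your proof is correct and follows essentially the same route as the paper: decompose the disconnected $G$ into disjoint pieces, split the global sections of the relevant line bundle as a direct sum over the pieces, and deduce that a proper subdivisor already contributes, contradicting criticality. Your added checks that the candidate condition and affineness hypothesis pass to the subdivisors are careful touches the paper leaves implicit, but the argument is the same.
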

\begin{proof}
Suppose that $G$ is disconnected, so $G=G'+G''$ with $0<G',G''<G$ and $G'$ and $G''$ disjoint. Then
\begin{multline*}
 H^0(G,\Oc_Y(K_\pi-\lfloor \lambda F\rfloor+G)|_G) \\= H^0(G',\Oc_Y(K_\pi-\lfloor \lambda F\rfloor+G')|_{G'}) \oplus H^0(G'',\Oc_Y(K_\pi-\lfloor \lambda F\rfloor+G'')|_{G''})\,.
\end{multline*}
So if $\lambda$ is contributed by $G$, it is also contributed by $G'$ or $G''$, contradicting critical contribution.
\end{proof}

\section{$\pi$-antieffective divisors and integrally closed ideals}\label{sec:unloading}
From now on, we consider a regular local ring $R$ over $k$ of dimension at least $2$, such that $X=\Spec R$ is the germ of a smooth algebraic variety over $k$. Let $\ak$ be an ideal sheaf on $X$, fix a log resolution $\pi:Y\to X$, and let $F$ be the divisor satisfying $\ak\cdot \Oc_Y = \Oc_Y(-F)$. We denote the relative canonical divisor by $K_\pi$, this divisor is supported over the exceptional divisors $E_i$ for $i=1,...,r$.  %\textcolor{red}{replace $\pi^*D$ by $F$ everywhere!}

If $\dim X=2$, recall the following definition.
\begin{defn}
If $Y$ is a surface, then a divisor $D$ on $Y$ is called \textit{antinef} (or \textit{$\pi$-antinef}) if $-D\cdot E_i\geq 0$ for all $i\in \{1,\dots,r\}$.
\end{defn}
This notion is introduced in \cite{Lip69}, and is also explained in \cite{Tuc09}. A generalization of this concept to higher dimensions is given in \cite{CGL96}. %\textcolor{red}{and more?}

In section \ref{ssec:antinef_closure}, we define $\pi$-antieffective divisors, which is a generalization to higher dimensions of antinef divisors. It is different from the one in \cite{CGL96}, but more useful for our purposes. We prove the existence of the $\pi$-antieffective closure and present a way to compute it.

Lipman proved that in the two-dimensional case there is a one-to-one correspondence between antinef divisors on $Y$ and integrally closed ideals in $R$ defining invertible sheaves on $Y$. In higher dimensions, this correspondence does not hold anymore. We will prove a weaker version in Section \ref{ssec:correspondence}.

Before introducing $\pi$-antieffective divisors, we give a definition.

\begin{defn}
 We say that two divisors $D_1$ and $D_2$ on $Y$ are \textit{equivalent} if and only if they define the same ideal in $R$, i.e., if and only if $\pi_*\Oc_Y(-D_1)=\pi_*\Oc_Y(-{D_2})$.
\end{defn}

From now on, if we want to refer to linear or numerical equivalence, we will state it clearly, so no confusion will arise.

\subsection{Unloading}\label{ssec:antinef_closure}
The following definition is the generalization we want of the notion of antinef divisor.

\begin{defn}
 Let $D$ be a divisor on $Y$ with integral coefficients. We say that $D$ is \emph{$\pi$-antieffective} if and only if $H^0(E,\Oc_{Y}(-D)|_E)\neq 0$ for every $\pi$-exceptional prime divisor $E$, i.e., if and only if $-D|_{E}$ is a divisor class on $E$ containing an effective divisor.
\end{defn}

In general, for any divisor, we can find a $\pi$-antieffective divisor equivalent to the given divisor.

\begin{thm}\label{Thm:unloading}
  Let $D$ be a divisor on $Y$, then there exists a unique integral effective $\pi$-antieffective divisor $\tilde{D}$ satisfying
 \begin{itemize}
  \item $\tilde{D}\geqslant D$, and
  \item for any $\pi$-antieffective divisor $D'$ such that $D'\geqslant D$, we have $D'\geqslant \tilde{D}$.
 \end{itemize}
 Moreover, this divisor is equivalent to $D$. %It is given by $\tilde D = \sum d_i D_i$, where
%\[d_i = \min\left\{\nu_i(f) \mid f\in H^0(Y,\Oc_Y(-D))\right\}.\]
%\textcolor{red}{Or maybe $f$ in something else, might have problems if $D$ is not effective. But maybe this last part isn't even true since the last mistake.}
\end{thm}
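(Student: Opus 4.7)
The plan is to construct $\tilde D$ by an iterative unloading procedure, mimicking the Laufer/Enriques construction in dimension two but with the cohomological condition $H^0(E_i,\Oc_Y(-D)|_{E_i})\neq 0$ playing the role of the numerical test $-D\cdot E_i\geq 0$. The core observation is that for any $\pi$-exceptional prime divisor $E$, the short exact sequence
$$0\lra \Oc_Y(-D-E)\lra \Oc_Y(-D)\lra \Oc_Y(-D)|_E\lra 0$$
pushes forward, by left-exactness of $\pi_*$, to
$$0\lra \pi_*\Oc_Y(-D-E)\lra \pi_*\Oc_Y(-D)\lra \pi_*\bigl(\Oc_Y(-D)|_E\bigr).$$
Since $X=\Spec R$ is affine, $\pi(E)$ is affine, and the rightmost term is identified with $H^0(E,\Oc_Y(-D)|_E)$. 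Hence whenever this $H^0$ vanishes, $D$ and $D+E$ are equivalent.

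Guided by this, I would set $D_0:=D$ and iterate: if $D_k$ is $\pi$-antieffective, stop and declare $\tilde D:=D_k$; otherwise pick any exceptional prime $E_{j_k}$ with $H^0(E_{j_k},\Oc_Y(-D_k)|_{E_{j_k}})=0$ and set $D_{k+1}:=D_k+E_{j_k}$. By the observation above, every $D_k$ is equivalent to $D$, so $\pi_*\Oc_Y(-D_k)=\pi_*\Oc_Y(-D)$ for all $k$. Termination, which I expect to be the main technical point, follows by fixing any nonzero $f\in \pi_*\Oc_Y(-D)$ (nonzero because $\pi$ is birational): then $f\in \pi_*\Oc_Y(-D_k)$ for every $k$, so $\operatorname{div}_Y(f)\geq D_k$, uniformly bounding the exceptional coefficients of $D_k$ from above. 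As the sequence is strictly increasing in $\Z^r$, it must stabilize after finitely many steps.

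For minimality, let $D'$ be any $\pi$-antieffective divisor with $D'\geq D$; I claim $D'\geq D_k$ for all $k$ by induction. The base case is the hypothesis. Assuming $D'\geq D_k$, write $D'=D_k+\Delta$ with $\Delta\geq 0$ and suppose toward contradiction that the coefficient of $E_{j_k}$ in $\Delta$ is zero. Then $E_{j_k}$ is not in the support of $\Delta$, so $\Delta|_{E_{j_k}}$ is a Cartier divisor on $E_{j_k}$, and tensoring its defining sequence with the line bundle $\Oc_{E_{j_k}}(-D_k|_{E_{j_k}})$ yields a short exact sequence
$$0\lra \Oc_{E_{j_k}}(-D'|_{E_{j_k}})\lra \Oc_{E_{j_k}}(-D_k|_{E_{j_k}})\lra \mathcal Q\lra 0$$
on $E_{j_k}$. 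Taking global sections gives an injection $H^0(E_{j_k},\Oc_Y(-D')|_{E_{j_k}})\hookrightarrow H^0(E_{j_k},\Oc_Y(-D_k)|_{E_{j_k}})=0$, contradicting $\pi$-antieffectiveness of $D'$. Hence $\Delta\geq E_{j_k}$ and $D'\geq D_{k+1}$; passing to the limit yields $D'\geq \tilde D$. Uniqueness is then immediate, equivalence to $D$ has already been established, and effectiveness is automatic once $D$ itself is effective, since the procedure only adds effective exceptional divisors. The two nontrivial points I anticipate are precisely the termination argument and the inductive step forcing the coefficient of $E_{j_k}$ in $\Delta$ to jump by at least one; both are settled by the exact sequence manipulations above.
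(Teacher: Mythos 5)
Your proof is correct, and it organizes the argument differently from the paper. The paper (Lemma~\ref{existence_antieffective_closure}) first establishes existence of the $\pi$-antieffective closure abstractly: it exhibits an explicit element $D_0=\pi^*\pi_*D+mG$ of the set $\D$ of $\pi$-antieffective divisors $\geq D$, and then shows $\D$ has a unique minimal element because $\D$ is closed under coefficient-wise minimum (together with the implicit fact that any descending chain in $\D$ stabilizes). Only afterwards does it introduce the single unloading step (Proposition~\ref{prop:eq_divisors}, which you also prove) and argue---rather informally, via ``the order of adding $E_i$'s does not matter''---that unloading reaches $\tilde D$ in finitely many steps (Proposition~\ref{prop:fin_num_steps}). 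You instead build $\tilde D$ directly as the terminating output of the unloading procedure. Your termination bound, fixing a nonzero $f\in\pi_*\Oc_Y(-D)$ and using $\mathrm{div}_Y(f)\geq D_k$ for all $k$, is self-contained and sidesteps the abstract existence step (the parenthetical ``nonzero because $\pi$ is birational'' is shorthand; the real reason is that since $D$ has finite support one can always find $f\in K(X)^*$ with sufficiently high order of vanishing along each component of $D$, and such $\mathrm{div}_Y(f)$ is in fact itself an explicit element of $\D$, as $\Pic X=0$). More substantively, your inductive minimality argument---that any $\pi$-antieffective $D'\geq D$ dominates each $D_k$, via the injection $H^0(E_{j_k},\Oc_Y(-D')|_{E_{j_k}})\hookrightarrow H^0(E_{j_k},\Oc_Y(-D_k)|_{E_{j_k}})=0$---makes fully rigorous the point the paper treats heuristically, namely that the unloading procedure cannot overshoot the closure regardless of the order of choices; this is where your write-up improves on the paper's exposition. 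You also correctly flag that effectivity of $\tilde D$ holds only when $D$ is effective, consistent with the fact that the paper's own Lemma~\ref{existence_antieffective_closure} drops the word ``effective'' from the theorem's statement.
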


%This theorem allows us to define what we will call the \textit{$\pi$-antieffective closure} of a divisor.
This leads us to the following definition.

% \begin{defn}
% The divisor $\tilde D$ of the previous theorem is called the \textit{$\pi$-antieffective closure} of $D$.
% \end{defn}

\begin{defn}
The $\pi$-antieffective divisor $\tilde D$ satisfying
 \begin{itemize}
  \item $\tilde{D}\geqslant D$, and
  \item for any $\pi$-antieffective divisor $D'$ such that $D'\geqslant D$, we have $D'\geqslant \tilde{D}$,
 \end{itemize}
is called the \textit{$\pi$-antieffective closure} of $D$.
\end{defn}

The proof of the theorem will be divided in several results. In the forthcoming lemma, we prove that such a minimal divisor exists. Later on (see Propositions \ref{prop:eq_divisors} and \ref{prop:fin_num_steps}), we prove that a divisor and its $\pi$-antieffective closure are equivalent.
\newpage
\begin{lemma}\label{existence_antieffective_closure}
 Let $D$ be a divisor, then there exists a unique integral $\pi$-antieffective divisor $\tilde{D}$ satisfying
 \begin{itemize}
  \item $\tilde{D}\geqslant D$, and
  \item for any $\pi$-antieffective divisor $D'$ such that $D'\geqslant D$, we have $D'\geqslant \tilde{D}$.
 \end{itemize}
Moreover, $\tilde D-D$ is supported on the exceptional locus of $\pi$.
\end{lemma}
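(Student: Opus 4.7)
The statement has three parts: uniqueness, existence, and that $\tilde D-D$ is exceptionally supported. Uniqueness is immediate: if $\tilde D_1$ and $\tilde D_2$ both enjoy the two properties, then the minimality clause of each applied to the other forces $\tilde D_1 \leq \tilde D_2$ and vice versa.

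For existence, I would work with the set
\[
\mathcal{S} \;:=\; \bigl\{\,D' : D' \geq D \text{ and } D' \text{ is } \pi\text{-antieffective}\,\bigr\},
\]
and show that (a) $\mathcal{S}$ is nonempty and (b) $\mathcal{S}$ is closed under componentwise minimum. The coefficients of any $D' \in \mathcal{S}$ are integers bounded below (componentwise) by those of $D$, and only finitely many are nonzero, so the componentwise infimum of $\mathcal{S}$ is realized by a finite pairwise minimum, which by (b) lies in $\mathcal{S}$; this element will be $\tilde D$. For (b), given $D_1, D_2 \in \mathcal{S}$ and $D_0 := \min(D_1, D_2) \geq D$, fix an exceptional prime $E_i$ and, without loss of generality, assume the $E_i$-coefficient of $D_0$ equals that of $D_1$. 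Then
\[
-D_0|_{E_i} \;=\; -D_1|_{E_i} + \sum_{V \neq E_i}\bigl(d_V(D_1) - d_V(D_0)\bigr)\,V|_{E_i},
\]
where each coefficient is $\geq 0$ and each $V|_{E_i}$ with $V \neq E_i$ is an effective divisor on $E_i$; so the right-hand side is effective, and $-D_0|_{E_i}$ inherits effectiveness from $-D_1|_{E_i}$.

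For (a), I would build an explicit element from the coherent fractional ideal $I := \pi_*\Oc_Y(-D) \subseteq K(X)$. For each prime divisor $V$ on $Y$, set $m_V := \min_{0 \neq f \in I} v_V(f)$ using the divisorial valuation $v_V$, and define $\bar D := \sum_V m_V V$. Since every $f \in I$ satisfies $v_V(f) \geq d_V(D)$, one has $\bar D \geq D$, and since moreover $v_V(f) \geq m_V$ for all $V$ and $f \in I$, the ideal $I$ coincides with $H^0(Y, \Oc_Y(-\bar D))$ (using that $X$ is affine so pushforward sections are global sections). To verify $\pi$-antieffectiveness, fix an exceptional $E_i$ and pick $f \in I$ with $v_{E_i}(f) = m_{E_i}$. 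The short exact sequence
\[
0 \to \Oc_Y(-\bar D - E_i) \to \Oc_Y(-\bar D) \to \Oc_Y(-\bar D)|_{E_i} \to 0
\]
sends $f$ to a section of $\Oc_Y(-\bar D)|_{E_i}$ which is nonzero precisely because $v_{E_i}(f) = m_{E_i}$ is achieved with equality, so $H^0(E_i, \Oc_Y(-\bar D)|_{E_i}) \neq 0$ and $\bar D \in \mathcal{S}$.

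Finally, for the exceptional-support claim, for any $D' \in \mathcal{S}$ let $D'_{\mathrm{tr}}$ agree with $D$ on every non-exceptional prime and with $D'$ on every exceptional prime. Then $D \leq D'_{\mathrm{tr}} \leq D'$, and $D' - D'_{\mathrm{tr}}$ is an effective combination of non-exceptional prime divisors, each restricting to an effective divisor on every $E_i$; the calculation of (b) then gives $D'_{\mathrm{tr}} \in \mathcal{S}$. Applying this construction to $D' = \tilde D$ and invoking the minimality of $\tilde D$ forces $\tilde D = \tilde D_{\mathrm{tr}}$, so $\tilde D - D$ is supported on the exceptional locus. The hardest step is (a): the verification that $\bar D$ is $\pi$-antieffective rests on exhibiting, at each $E_i$, a \emph{global} section of $\Oc_Y(-\bar D)$ whose restriction to $E_i$ is nonzero, which is where the affineness of $X$ --- so that elements of $\pi_*\Oc_Y(-\bar D)$ really do give global sections --- is essential.
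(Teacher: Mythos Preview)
Your proof is correct and complete. The overall architecture---showing the candidate set is nonempty, closed under componentwise minimum, and then extracting the least element---matches the paper, and your argument for closure under minimum is identical to theirs. The two proofs diverge in how they populate $\mathcal{S}$ and in how they obtain the exceptional-support statement.

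For nonemptiness, the paper builds an explicit witness by brute force: it picks $g\in R$ vanishing along every exceptional valuation, lets $G$ be the exceptional part of $\operatorname{div}(g)$, and takes $D_0=\pi^*\pi_*D+mG$ for $m\gg 0$; antieffectivity then follows from $\Pic X=0$, since $-D_0|_E$ is linearly equivalent to the restriction of an effective affine divisor. Your construction is more intrinsic: you take the fractional ideal $I=\pi_*\Oc_Y(-D)$, read off $\bar D$ from the valuative minima of elements of $I$, and exhibit at each $E_i$ a global section of $\Oc_Y(-\bar D)$ whose order along $E_i$ is exactly the minimum, hence restricts nonzero. This avoids the auxiliary choice of $g$ and the appeal to $\Pic X=0$, at the cost of invoking affineness of $X$ to identify sections of the pushforward with global sections on $Y$ (which you correctly flag). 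For the exceptional-support claim, the paper simply notes that their particular $D_0$ differs from $D$ only exceptionally and sandwiches $\tilde D$ between $D$ and $D_0$; your truncation argument is cleaner and independent of the specific witness constructed for $\mathcal{S}$.

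One small point of exposition: your sentence ``only finitely many are nonzero, so the componentwise infimum of $\mathcal{S}$ is realized by a finite pairwise minimum'' is slightly loose, since different elements of $\mathcal{S}$ may have different supports. The clean fix is to first intersect with your witness $\bar D$ (using (b)), so that one works inside the finite set $\{D'\in\mathcal{S}:D\le D'\le\bar D\}$; then the infimum is manifestly attained. The paper is equally informal on this point, so this is not a defect relative to the original.
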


\begin{proof}
 %First of all, let $\D$ be the set of all $\pi$-antieffective divisors $D'$ such that $D'\geqslant D$. We have to prove that $\D$ has a unique minimal element.
This proof is based partially on Paragraph 2.2 in \cite{Tuc09}.

Let $\D$ be the set of all $\pi$-antieffective divisors $D'$ such that $D'\geqslant D$. In the first part of the proof, we show that $\D$ is non-empty, while the second part is devoted to prove that $\D$ has a unique minimal element.
 
% \textcolor{blue}{The proof of this result is divided in two parts: one devoted to see that $\D$, the set of all $\pi$-antieffective divisors $D'$ such that $D'\geqslant D$, is not empty. And a second one devoted to prove that $\D$ has a unique minimal element.}
 
% Let's check first that $\D$ is not empty: as we have defined, let $I_D = H^0(Y,\Oc_Y(-D))$ be the ideal associated to $D$, and consider $f\in I_D$. \textcolor{red}{In order to have global sections, we need some effectivity condition. But maybe otherwise the zero divisor will do the job? I still have my doubts though, because we also need this for $-K_\pi$, which is not effective...} We can associate a divisor $\textrm{div}(f)$ to $f$, and by definition $\textrm{div}(f)\geq D$. Also $\textrm{div}(f)\sim 0$ in $\textrm{Pic }Y$, so $\iota_E^*(\Oc_Y(-\textrm{div}(f)))\cong \Oc_E$ for every $E$, and hence $\textrm{div}(f)$ is $\pi$-antieffective. So $\D$ is non-empty.

We start with showing that $\D$ is non-empty. Take $g\in R$ such that $\nu_i(g)>0$ for all divisorial valuations $\nu_i$ associated to one of the exceptional divisors $E_i$. Let $G$ be the exceptional part of $\textrm{div}(g)$, and take $D_0:=\pi^*\pi_*D+mG$ for sufficiently large $m\in\mathbb N$. We claim that $D_0\in\D$. Clearly $D_0\geq D$. Moreover, $D_0$ is $\pi$-antieffective. Indeed, it equals $\pi^*(\pi_*D+m\text{div}(g)) - m G_{aff}$, where $G_{aff}$ is the affine part of $\pi^*\text{div}(g)$, and hence, since $\text{Pic }X = 0$, we have that $-D_0|_E$ is linearly equivalent to $mG_{aff}|_E$, which is clearly effective.

In order to check the unicity of a minimal element in $\D$, assume that there exist two different minimal divisors $D_1=\sum_i d_i^1 E_i$ and $D_2=\sum_i d_i^2 E_i$ in $\D$. Now, define $D'=\sum_i d_iE_i$ with $d_i=\min\{d_i^1,d_i^2\}$. Take an exceptional divisor $E$, and suppose that $d_E^1 \leq d_E^2$. Then
\[-D'|_E = -d_E^1 E|_E - \sum_{E_i\neq E} d_i E_i|_E\,.\]
If $E_i\neq E$, then $E_i|_E$ is an effective divisor on $E$, and therefore $\sum_{E_i\neq E} (d_i^1-d_i) E_i|_E = D_1|_E-D'|_E$ is an effective divisor on $E$. Since $D_1$ is $\pi$-antieffective, $-D_1|_E$ defines the class of an effective divisor, so also $-D'|_E$ defines an effective divisor class. By repeating this argument for any $E$, we conclude that $D'$ is also $\pi$-antieffective and satisfies $D'\leqslant D_1$ and $D'\leqslant D_2$, contradicting the minimality in $\D$ of $D_1$ and $D_2$.

Finally, since $D_0-D$ is supported on the exceptional locus of $\pi$, where $D_0$ is the divisor constructed above, the same holds for $\tilde D-D$, where $\tilde D$ is the unique minimal element of $\D$.
\end{proof}

The following result tells us how to find equivalent divisors.

\begin{prop}\label{prop:eq_divisors}
Let $D$ be a divisor on $Y$ and $E$ an exceptional divisor of $\pi$. %Denote by $\iota:E\to Y$ the embedding of $E$ in $Y$.
If $H^0(E,\Oc_{Y}(-D)|_E)=0$, then \[\pi_*\Oc_{Y}(-D-E)=\pi_*\Oc_{Y}(-D)\,.\]
\end{prop}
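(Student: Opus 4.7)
The plan is to derive the statement from the standard short exact sequence associated to the divisor $E$ in $Y$. Tensoring the ideal sequence
\[0 \to \Oc_Y(-E) \to \Oc_Y \to \Oc_E \to 0\]
with the invertible sheaf $\Oc_Y(-D)$ produces
\[0 \to \Oc_Y(-D-E) \to \Oc_Y(-D) \to \Oc_Y(-D)|_E \to 0.\]
Applying the left-exact functor $\pi_*$ then yields the exact sequence
\[0 \to \pi_*\Oc_Y(-D-E) \to \pi_*\Oc_Y(-D) \to \pi_*\bigl(\Oc_Y(-D)|_E\bigr),\]
so it will suffice to show that $\pi_*\bigl(\Oc_Y(-D)|_E\bigr)=0$ as a sheaf on $X$.

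For this I would exploit that $X=\Spec R$ is affine, so a quasi-coherent sheaf on $X$ vanishes if and only if its module of global sections vanishes. Since $\Oc_Y(-D)|_E$ is supported on $E$ (viewed as a sheaf on $Y$ via the closed immersion $\iota : E\hookrightarrow Y$), one has
\[H^0\!\left(X,\pi_*\bigl(\Oc_Y(-D)|_E\bigr)\right)=H^0\!\left(Y,\Oc_Y(-D)|_E\right)=H^0\!\left(E,\Oc_Y(-D)|_E\right),\]
which is zero by hypothesis. Hence $\pi_*\bigl(\Oc_Y(-D)|_E\bigr)=0$, and the left-most map in the displayed exact sequence is an isomorphism, giving $\pi_*\Oc_Y(-D-E)=\pi_*\Oc_Y(-D)$ as required.

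The only genuine issue is the passage from sheaf vanishing on $X$ to the vanishing of global sections, but this is automatic because $X$ is affine; everything else is a formal consequence of the structure sequence of $E$ together with the left-exactness of $\pi_*$. No higher direct image or vanishing theorem is needed at this step, since only the first term of the pushed-forward sequence is used.
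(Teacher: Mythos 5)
Your proof is correct and follows the same skeleton as the paper's: the structure sequence of $E$, tensoring by $\Oc_Y(-D)$, applying the left-exact $\pi_*$, and then proving that the third term vanishes via its module of global sections. The only real difference is in that last step. The paper factors $\pi\circ\iota$ through the image $C=\pi(E)\subset X$, observes that $\pi'_*\iota^*\Oc_Y(-D)$ is a quasi-coherent sheaf on the affine scheme $C$, and concludes the vanishing from $H^0(E,\Oc_Y(-D)|_E)=0$; you skip the intermediate $C$ and argue directly on the affine $X$, using $H^0(X,\pi_*\iota_*(-))=H^0(E,-)$. That is a genuine (if mild) simplification: you only need quasi-coherence of $\pi_*\iota_*(\Oc_Y(-D)|_E)$ on $X$, which holds because $\pi\circ\iota$ is quasi-compact and quasi-separated, and you never need to discuss $C$ or its affineness separately. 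The paper's route, factoring through $C$, is the one that generalizes to the formulation in Proposition~\ref{prop:contribution_globalsections} (where the hypothesis is that the divisor maps onto an affine subscheme rather than that $X$ itself is affine), which is presumably why the authors chose it; but in the local setting of Section~\ref{sec:unloading}, $X=\Spec R$, your shortcut is perfectly adequate.
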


\begin{proof}
Denote by $\iota:E\to Y$ the embedding of $E$ in $Y$.
Consider the short exact sequence
\[0\to \Oc_{Y}(-E)\to \Oc_{Y} \to \iota_{*}\Oc_E\to 0\,.\]
After tensoring with $\Oc_{Y}(-D)$ and pushing forward through $\pi$, we get the exact sequence
\begin{align}
0\to \pi_*\Oc_{Y}(-D-E)\to \pi_*\Oc_{Y}(-D) \to \pi_*(\iota_{*}\Oc_E\otimes_{\Oc_{Y}}\Oc_{Y}(-D))\,.\label{ses}
\end{align}
By the projection formula, we know that
\begin{align*}
\iota_{*}\Oc_E\otimes_{\Oc_{Y}}\Oc_{Y}(-D) &= \iota_{*}(\Oc_E\otimes_{\Oc_E} \iota^*\Oc_{Y}(-D))\\
&= \iota_{*}\iota^*\Oc_{Y}(-D)\,.
\end{align*}
Denote the image of $E$ through $\pi$ by $C$, and name the morphisms as follows.% denote $\pi':E\to C$ and $i':C\to X$, 
\[\begin{CD}
E @>\iota>> Y\\
@V\pi'=\pi|_{E}VV @V\pi VV\\
C @>\iota_C>> X
\end{CD}\]
Then we obtain that the last sheaf in (\ref{ses}) equals $\iota_{C*}\pi'_*\iota^*\Oc_{Y}(-D)$. Since $\Oc_{Y}(-D)$ is invertible, we know that $\pi'_*\iota^*\Oc_{Y}(-D)$ is a quasi-coherent $\Oc_C$-module (see \cite[Proposition II.5.8]{Har77}). Hence, since $C$ is affine, this $\Oc_C$-module is determined by its global sections. Since by assumption $\iota^*\Oc_Y(-D)$ has no global sections, also $\iota_{C*}\pi'_*\iota^*\Oc_{Y}(-D)$ has no global sections so we conclude that the sheaf equals to the zero sheaf. Hence by (\ref{ses}), \[\pi_*\Oc_{Y}(-D-E)=\pi_*\Oc_{Y}(-D)\,.\]
\end{proof}

%\textcolor{red}{Can't we do something like Lemma 2.1 in Tucker's paper:} \begin{verbatim}
%http://arxiv.org/pdf/0809.3043.pdf
%\end{verbatim}?

This result gives a constructive way to find the $\pi$-antieffective closure of a divisor $D$, called the {\bf unloading procedure.} It goes as follows. Let $D$ be a non-$\pi$-antieffective divisor. Then there exists at least one $\pi$-exceptional divisor $E$ such that $H^0(E,\Oc_Y(-D)|_E)=0$. We replace $D$ by $D':=D+E$ and repeat until the obtained divisor is $\pi$-antieffective. %By the previous proposition, the resulting divisor is equivalent to $D$. By Lemma \ref{existence_antieffective_closure}, it is clear that this is the $\pi$-antieffective closure, so we obtained the following proposition.

Note that we cannot accidentally `miss' the $\pi$-antieffective closure by chosing a specific order, since the order of adding $E_i$'s does not matter. Indeed, if $-D|_{E_1}$ and $-D|_{E_2}$ are both not linearly equivalent to an effective divisor, then $-(D+E_1)|_{E_2} = -D|_{E_2} -E_1|_{E_2}$ is not an effective divisor class either, so $E_2$ still has to be added somewhere in the process. Therefore, we obtained the following proposition.

\begin{prop}\label{prop:fin_num_steps}
 Let $D$ be a divisor, then after finitely many steps of unloading we reach the $\pi$-antieffective closure of $D$.
\end{prop}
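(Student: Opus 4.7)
The plan is to use the $\pi$-antieffective closure $\tilde D$ produced in Lemma \ref{existence_antieffective_closure} as a ceiling for the unloading process. Concretely, I will show by induction on the number of unloading steps that every intermediate divisor $D'$ produced from $D$ satisfies $D\leq D'\leq \tilde D$. Since $\tilde D - D$ is effective and supported on finitely many exceptional divisors with finite coefficients, this forces the procedure to terminate; and once it terminates, the resulting divisor is $\pi$-antieffective by the stopping criterion and $\geq D$ by construction, so minimality of $\tilde D$ in $\D$ forces equality with $\tilde D$.

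The key step is the inductive one. Suppose $D\leq D'\leq \tilde D$ and suppose $D'$ is not yet $\pi$-antieffective, so there is an exceptional prime $E$ with $H^0(E,\Oc_Y(-D')|_E)=0$. I claim $\mathrm{coeff}_E(D')<\mathrm{coeff}_E(\tilde D)$, which would give $D'+E\leq \tilde D$ and close the induction. To prove the claim, suppose for contradiction that $\mathrm{coeff}_E(D')=\mathrm{coeff}_E(\tilde D)$. Then I can write
\[\tilde D = D' + M, \qquad M=\sum_{E_i\neq E} m_i E_i,\quad m_i\geq 0.\]
Restricting to $E$, the components $E_i\neq E$ meet $E$ properly, so $M|_E$ is an effective divisor on $E$. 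Hence
\[-D'|_E \;=\; -\tilde D|_E + M|_E,\]
and since $-\tilde D|_E$ is the class of an effective divisor (by $\pi$-antieffectiveness of $\tilde D$) and $M|_E$ is itself effective, $-D'|_E$ is linearly equivalent to an effective divisor, contradicting $H^0(E,\Oc_Y(-D')|_E)=0$. This proves the claim.

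Combining: the intermediate divisors form an increasing sequence bounded above componentwise by $\tilde D$, hence the procedure terminates in finitely many steps at some divisor $D^{\ast}$ with $D\leq D^{\ast}\leq \tilde D$. By construction, $D^{\ast}$ satisfies $H^0(E,\Oc_Y(-D^{\ast})|_E)\neq 0$ for every exceptional $E$, i.e., $D^{\ast}$ is $\pi$-antieffective, so $D^{\ast}\in\D$. Minimality of $\tilde D$ in $\D$ gives $D^{\ast}\geq\tilde D$, and combined with $D^{\ast}\leq \tilde D$ we conclude $D^{\ast}=\tilde D$. As a byproduct, each individual step replaces $D'$ by $D'+E$ with $H^0(E,\Oc_Y(-D')|_E)=0$, so Proposition \ref{prop:eq_divisors} gives $\pi_*\Oc_Y(-D')=\pi_*\Oc_Y(-D'-E)$; iterating shows $\tilde D$ is equivalent to $D$, completing the outstanding claim of Theorem \ref{Thm:unloading}.

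The main obstacle I anticipate is the inductive step, specifically the justification that $M|_E$ is an effective divisor on $E$ when the components of $M$ are exceptional primes distinct from $E$. This relies on $E$ together with the other $E_i$ forming part of a simple normal crossings configuration, so that each $E_i|_E$ makes sense as an honest effective divisor on $E$ rather than merely a divisor class; this is built into the log resolution hypothesis, but it deserves to be pointed out explicitly in the write-up.
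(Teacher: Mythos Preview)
Your proof is correct and follows essentially the same strategy as the paper: use the $\pi$-antieffective closure $\tilde D$ from Lemma \ref{existence_antieffective_closure} as a ceiling and apply the restriction trick (that $E_i|_E$ is effective for $E_i\neq E$) to show each unloading step stays below $\tilde D$, forcing termination at $\tilde D$. The paper phrases the same restriction argument as an order-independence observation rather than an explicit induction on the steps, but the content is identical and your write-up is in fact more complete.
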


%NO PROOF NEEDED!
%\begin{proof}
%Write $D$ as $D=\sum_i d_i D_i$. For $f \in H^0 (Y,\Oc_Y(-D))$, we have $d_i\leq \nu_i(f)$, so all of the coefficients of equivalent divisors are bounded, and hence the unloading procedure has to stop with a $\pi$-antieffective divisor. From Proposition \ref{eq_divisors} it follows that this divisor has to be equivalent to $D$.
%\end{proof}

Together with the fact that we only encounter equivalent divisors during the unloading procedure, this finishes the proof of Theorem \ref{Thm:unloading}. 

\vspace{3mm}

Our unloading procedure is based on work of Enriques in \cite{Enr15}. It is also Laufer's algorithm to compute the fundamental cycle \cite{Lau72}. It has also been described by Casas-Alvero \cite{Cas00} and Reguera \cite{Reg97}. An improved version of it is used in \cite{AAD14} to compute jumping numbers on surfaces with rational singularities. Here, we generalized the algorithm in \cite{Lau72} to higher dimensions. The main difference is that checking positivity of an intersection number is replaced by checking effectivity of a divisor class. In Section \ref{sec:algorithm}, the unloading procedure will be used in an algorithm to compute jumping numbers. %\textcolor{red}{Comment more on this?}

% \subsection{Some consequences}
% \textcolor{red}{???}

\subsection{A correspondence between globally generated invertible sheaves and integrally closed ideals}\label{ssec:correspondence} %\textcolor{red}{Change title?}
The main goal of this section is to generalize the results of Lipman about the correspondence between integrally closed ideals and antinef divisors (see \cite[\textsection 18]{Lip69}). Lipman proves that in the two-dimensional case there is a one-to-one correspondence between antinef divisors and $\M$-primary integrally closed ideals that determine invertible sheaves on $Y$.

\vspace{2mm}

A first generalization of this result to higher dimensions is \cite[Proposition 1.20]{CGL96}. In this paper, the authors prove a similar relation between finitely supported integrally closed ideals and globally generated divisors on varieties obtained by finitely many point blow-ups. %
%A first generalization to higher dimensions of this result for the particular case of varieties obtained by a finite succession of point blowing-ups appears in the paper of Campillo, Gonzalez-Sprinberg and Lejeune-Jalabert (see \cite[Proposition 1.20]{CGL96}). There it is introduced a similar relation between finitely supported integrally closed ideals and the notion of divisor globally generated.
They also prove that if $-D$ is globally generated, then $-D\cdot C\geqslant 0$ for any exceptional curve $C$, as well as a counterexample for the reverse implication in dimensions higher than 2. We will prove a similar relation, which works for ideals that are not necessarily finitely supported, but which determine an invertible sheaf in a fixed birational morphism. The proof is essentially the same as the corresponding part of Lipman's proof in the two-dimensional setting. We repeat it here for completeness.

\begin{thm}
The mapping $D\mapsto I_D=\Gamma(Y,\Oc_{Y}(-D))$ is a one-to-one correspondence between the set of effective divisors $D$ on $Y$ such that $\Oc_Y(-D)$ is generated by its global sections, and integrally closed ideals $I$ of $R$ such that $I\cdot \Oc_Y$ is an invertible sheaf. The inverse is given by $I\mapsto D_I$, where $D_I$ is the divisor satisfying $I\cdot \Oc_Y = \Oc_Y(-D_I)$.
\end{thm}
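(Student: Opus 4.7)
The plan is to imitate Lipman's two-dimensional argument by verifying that both maps are well-defined and then that the two compositions are the identity. For well-definedness of $D\mapsto I_D$, I would first note that since $\pi$ is proper birational with $X$ regular, $\pi_*\Oc_Y=\Oc_X$, and hence $I_D=\Gamma(Y,\Oc_Y(-D))\subseteq\Gamma(Y,\Oc_Y)=R$ is an ideal. Because $\Oc_Y(-D)$ is globally generated, the evaluation map $I_D\otimes_R\Oc_Y\twoheadrightarrow\Oc_Y(-D)$ is surjective, so $I_D\cdot\Oc_Y=\Oc_Y(-D)$, which is invertible. Integral closedness of $I_D$ follows from the standard fact that on a proper birational modification of an affine normal scheme, $\Gamma(Y,\Oc_Y(-D))$ is integrally closed: any $f\in R$ satisfying a monic polynomial relation over $I_D$ satisfies $v(f)\geq\text{ord}_E(D)$ for every divisorial valuation $v$ centered at a component of $D$, hence already lies in $I_D$.

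For the reverse map $I\mapsto D_I$, the hypothesis that $I\cdot\Oc_Y$ is invertible gives a Cartier divisor $D_I$ with $I\cdot\Oc_Y=\Oc_Y(-D_I)$. Since $I\subseteq R$, sections of $I$ yield global sections of $\Oc_Y(-D_I)\subseteq\Oc_Y$, forcing $D_I\geq 0$; and by construction these sections generate $I\cdot\Oc_Y=\Oc_Y(-D_I)$, so $\Oc_Y(-D_I)$ is globally generated.

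Next I would verify $D_{I_D}=D$: from the well-definedness argument above, $I_D\cdot\Oc_Y=\Oc_Y(-D)$, and by uniqueness of the divisor representing an invertible subsheaf of $\Oc_Y$, this means $D_{I_D}=D$. The more delicate equality is $I_{D_I}=I$, where the inclusion $I\subseteq I_{D_I}=\Gamma(Y,\Oc_Y(-D_I))=\Gamma(Y,I\cdot\Oc_Y)$ is tautological. To prove the reverse inclusion, I would use the valuative characterization of integral closure: an element $f\in R$ lies in $\overline{I}$ if and only if $v(f)\geq v(I)$ for every divisorial valuation $v$ of $R$. If $f\in\Gamma(Y,I\cdot\Oc_Y)$, then $\text{ord}_E(f)\geq\text{ord}_E(D_I)=\text{ord}_E(I)$ for every prime divisor $E\subseteq Y$, and any divisorial valuation of $R$ either is visible on $Y$ or can be dominated by one that is after a further modification (on which $I$ still pulls back to an invertible sheaf with the same orders). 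Hence $f\in\overline{I}=I$.

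The main obstacle is this last step: identifying $\Gamma(Y,I\cdot\Oc_Y)$ with the integral closure of $I$ rather than merely with some intermediate ideal. The subtlety is that not every divisorial valuation of $R$ corresponds to a component of the exceptional locus of our fixed $\pi$, so one must argue that the orders along components of $Y$ already detect the integral closure — either by invoking the Rees valuation theorem (the Rees valuations of $I$ are exactly the normalized orders along the components of $D_I$, since $I\cdot\Oc_Y$ is invertible) or by reducing to further blow-ups on which $I$ remains invertible and applying a limiting argument, as in \cite[\textsection 18]{Lip69}.
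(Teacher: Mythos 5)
Your proof is correct and follows essentially the same outline as the paper's: well-definedness of both maps, integral closedness of $I_D$ via the valuative inequality $n\nu_i(f)\geq\min_j\{\nu_i(a_j)+(n-j)\nu_i(f)\}$, global generation of $\Oc_Y(-D_I)$ from generators of $I$, and the two composites being the identity. The only point where the treatments genuinely diverge is the inclusion $\Gamma(Y,I\cdot\Oc_Y)\subseteq I$: the paper dispatches it by citing \cite[Proposition 6.2]{Lip69}, whereas you correctly flag the subtlety (a priori, not every divisorial valuation of $R$ is visible on $Y$) and sketch the Rees-valuation justification that underlies Lipman's statement — since $I\cdot\Oc_Y$ is invertible, $\pi$ factors through the (normalized) blow-up of $I$, so all Rees valuations of $I$ appear among the prime divisors of $Y$, and these already determine $\overline{I}=I$. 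This is a valid, somewhat more self-contained route to the same conclusion; the paper's citation is terser but hides exactly the point you made explicit.
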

%Note that $\nu_i(I)$ is always well defined since %$R$ is Noetherian and every $\nu_i$ is non-negative on $R$.
\begin{proof}
If $D$ is an effective divisor, then $D=\sum_{i} d_i E_i$, where $E_i$ runs over all prime divisors on $Y$. Moreover, $d_i\geq0$ for all $i$, and $d_i=0$ for all but finitely many $i$. We have $\Oc_Y(-D)\subseteq \Oc_Y$, and hence
\[I_D=\Gamma(Y,\Oc_Y(-D))\subseteq \Gamma(Y,\Oc_Y)=R\,.\]
So $I_D$ is an ideal of $R$. Moreover, since $\Oc_Y(-D)$ is generated by its global sections, $I_D\cdot \Oc_Y = \Oc_Y(-D)$ is invertible.

Now we prove that $I_D$ is integrally closed. It is clear that
\[I_D=\{f\in R \mid \forall i: \nu_i(f)\geq d_i\}\,,\]
where $\nu_i$ is the divisorial valuation corresponding to $E_i$. Take $f\in R$ and suppose $f$ satisfies an equation $$f^n+a_1f^{n-1}+\dots+a_{n-1}f+a_n=0\,,$$ where $a_j\in (I_D)^j$ for $j=1,\dots,n$. Then for any $i$ the properties of valuations yield
\[n\nu_i(f)\geq \min_j\{\nu_i(a_j)+(n-j)\nu_i(f)\}\,.\]
So there exists $j_0\in\{1,\dots,n\}$ such that
\[n\nu_i(f)\geq\nu_i(a_{j_0})+(n-j_0)\nu_i(f)\,,\]
and hence $j_0\nu_i(f)\geq \nu_i(a_{j_0})\geq j_0d_i$, meaning that $f\in I_D$. This implies that $I_D$ is integrally closed.

Conversely, take an integrally closed ideal $I$ such that $I\cdot \Oc_Y$ is invertible. Then $D_I$ is such that $I\cdot\Oc_Y = \Oc_Y(-D_I)$, so $\Oc_Y(-D_I)$ is clearly generated by its global sections. Indeed, a set of generators of $I$ determines global sections of $I\cdot \Oc_Y$, and their restrictions generate the stalks.

%\textcolor{red}{%Indeed, by construction, $I\cdot \Oc_Y$ is the ideal generated by the image of $\pi^{-1}\mathcal I$ through the morphism $\pi^{-1}\Oc_X\to\Oc_Y$. Therefore, the stalks of $I\cdot \Oc_Y$ are generated by restrictions of generators of $I$, which are global sections.}

By \cite[Proposition 6.2]{Lip69}, $I_{D_I} = \Gamma(Y,\Oc_Y(-D_I)) = I$, since $I$ is integrally closed. Conversely, if $\Oc_Y(-D)$ is generated by its global sections, then $\Oc_Y(-D) = I_D\cdot \Oc_Y$, and also $I_D\cdot \Oc_Y = \Oc_Y(-D_{I_D})$, which implies $D_{I_D} = D$.
\end{proof}

In our setting we are interested in the relation between $\pi$-antieffective divisors and integrally closed ideals. However, no one-to-one correspondence between them is known. We do know that the set of $\pi$-antieffective divisors contains the set of divisors associated to an integrally closed ideal.

\begin{prop}\label{prop:globallygenerated_implies_antieffective}
If $\Oc_Y(-D)$ is generated by global sections, then $D$ is $\pi$-antieffective.
\end{prop}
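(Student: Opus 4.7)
The plan is to argue directly from the definition of $\pi$-antieffectivity. Fix a $\pi$-exceptional prime divisor $E$; I must show $H^{0}(E,\Oc_Y(-D)|_E)\neq 0$. The strategy is to exhibit a global section of $\Oc_Y(-D)$ whose restriction along the closed embedding $\iota\colon E\hookrightarrow Y$ is a nonzero section of $\iota^*\Oc_Y(-D)=\Oc_Y(-D)|_E$.

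I would pick any closed point $y\in E$. Since $\Oc_Y(-D)$ is generated by its global sections, its stalk at $y$ is generated over $\Oc_{Y,y}$ by elements of $H^{0}(Y,\Oc_Y(-D))$; by Nakayama's lemma at least one such global section $s$ has nonzero image in the fiber $\Oc_Y(-D)\otimes_{\Oc_Y}k(y)$. Because the fiber of $\iota^*\Oc_Y(-D)$ at $y$ canonically coincides with the fiber of $\Oc_Y(-D)$ at $y$, the restricted section $s|_E:=\iota^*s\in H^{0}(E,\Oc_Y(-D)|_E)$ has nonzero value at $y$, hence is itself nonzero. This produces the desired nonzero element of $H^{0}(E,\Oc_Y(-D)|_E)$.

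There is no serious obstacle: one may equivalently phrase the argument by observing that $\iota^*\Oc_Y(-D)$ is a globally generated invertible sheaf on the nonempty scheme $E$ (its global sections are the pullbacks of the global sections of $\Oc_Y(-D)$), and any such sheaf automatically admits a nonzero global section. The only point to watch is to choose the base point $y$ on $E$ so that the chosen generating section survives restriction; this is what prevents the argument from being circular.
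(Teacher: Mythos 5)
Your proof is correct and is essentially the paper's argument: the paper observes that pulling back the surjection $\Oc_Y^m\twoheadrightarrow\Oc_Y(-D)$ along $\iota\colon E\hookrightarrow Y$ shows $\iota^*\Oc_Y(-D)$ is globally generated, hence has a nonzero global section, while you make the same restriction move pointwise, picking $y\in E$ and a global section not vanishing in the fiber at $y$, which is precisely the content of global generation. Your pointwise phrasing is, if anything, slightly more explicit about why global generation of a nonzero invertible sheaf forces a nonzero section, and your closing remark that $\iota^*\Oc_Y(-D)$ is itself globally generated is exactly the paper's formulation.
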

\begin{proof}
Let $E\subset Y$ be an exceptional divisor, and denote $\iota:E\to Y$ and $\mathcal L=\Oc_Y(-D)$. We have to show that $H^0(E,\iota^*\mathcal L)\neq 0$. Since $\mathcal L$ is globally generated, there exists an exact sequence \[0\to \mathcal K \to \Oc_Y^m \to \mathcal L \to 0\] of sheaves on $Y$. Pulling back by $\iota$, we get an exact sequence \[\iota^*\mathcal K \to \Oc_E^m \to \iota^* \mathcal L \to 0\] on $E$, so $\iota^*\mathcal L$ is also generated by its global sections. In particular, $H^0(E,\iota^*\mathcal L)\neq0$.
\end{proof}

%\textcolor{red}{Victor's suggestion: Globally generated means that for each point $p\in X$, there exists a section $s$ in $H^0(O_X(-F))$ such that $s(P)\neq 0$. In particular, if $p$ is a point in an exceptional divisor $E_i$, then the restriction $s_{|E_i} \in H^0(O_{E_i}(-F))$ is different from zero, and in particular $H^0(O_{E_i}(-F))\neq 0$. Since this is true for all $E_i$, $F$ should be antinef in your sense.}

%\textcolor{red}{I'm not convinced of this remark, he switches too easily to $E_i$.}

%\textcolor{red}{
%Remark: Finitely supported is not needed (I think). Campillo needed it because he only worked with constellations, which is very related to being finitely supported. Moreover, we fix a resolution, while Campillo lets the resolution vary.
%}

The converse of the previous proposition is true in dimension $2$ (see \cite[\textsection 18]{Lip69}), but in higher dimensions it does not hold anymore, as is clear from the following example, which is inspired heavily on Remark 1.24 in \cite{CGL96}. %We repeat it here for completeness.

\begin{example}
Let $X=\text{Spec}\,R$ be a smooth affine three-dimensional variety.
Consider the blowing-up at a point $0$ on $X$, followed by blowing up at nine points on the exceptional divisor $E_0$ in general position. This means that they lie on a non-singular cubic curve $C_0$, and that $C_0$ is the only cubic curve on $E_0$ passing through these nine points. Denote the new exceptional divisors by $E_1, \dots E_9$, and the morphism by $\pi:Y\to X$. We will show that $D:=3E_0 + 4\sum_{i=1}^9 E_i$ is $\pi$-antieffective, but that $\Oc_Y(-D)$ is not generated by its global sections.

Since $-D|_{E_0}$ is linearly equivalent to the strict transform of the curve $C_0$, it is effective in $\Pic E_0$.
Furthermore, $-D|_{E_i}$ for $i\in\{1,\dots,9\}$ is the class of a line on $E_i$, hence is also effective in $\Pic E_i$. So we see that $D$ is $\pi$-antieffective.

%Suppose there is an ideal $I$ of $R$ such that $I\cdot\Oc_Y = \Oc_Y(-D)$. Then $I$ is included in $M_0^3$, where $M_0$ is the maximal ideal of the point $0$, and its image in $M_0^3/M_0^4$ is generated by the homogeneous polynomial defining $C_0$, since this is the only cubic curve on $E_0$ passing through the nine points. But then...

If $\Oc_Y(-D)$ would be globally generated, then also its restriction to $E_0$ should be globally generated, as is clear from the proof of Proposition \ref{prop:globallygenerated_implies_antieffective}. But this is the sheaf defined by the strict transform of the curve $C_0$. Since this is the only cubic curve on $\mathbb P^2$ through the nine points, this divisor cannot be moved, and hence the sheaf is not globally generated.
\end{example}

\section{An algorithm to compute jumping numbers}\label{sec:algorithm}
In this section, we discuss a technique that can be used to compute jumping numbers, based on the algorithm of Alberich-Carrami\~nana, \`Alvarez Montaner and the second author described in \cite{AAD14}.

\subsection{Computation of supercandidates}\label{ssec:Comp_supercandidates}
We will construct a set $S\subset \mathbb Q$ that contains all the jumping numbers, but is in general much smaller than the set of candidate jumping numbers.
Following \cite{AAD14}, we work as follows. Let $S$ be the empty set. One by one, we will add numbers to this set. First, we add the log canonical threshold $\lambda_1=\min\left\{\left.\frac{k_i+1}{e_i}\right| i\in I\right\}$ to $S$. Assume that we added the value $\lambda$, then the next value we add will be
\begin{equation}\label{formula:supercandidate}
\min\left\{\left.\frac{k_i+1+e_i^{\lambda}}{e_i} \right| i\in I \right\},
\end{equation}
where $D_\lambda=\sum_{i\in I} e_i^{\lambda} E_i$ is the $\pi$-antieffective closure of $\lfloor \lambda F\rfloor - K_\pi$.

\begin{defn}
The elements of the set $S$ are called \textit{supercandidates}.
\end{defn}
Following Definition 4.3 in \cite{AAD14}, we define the minimal jumping divisor.
\begin{defn}
If $\lambda$ is a supercandidate, then the \textit{minimal jumping divisor} associated to $\lambda$ is the reduced divisor $G_\lambda$, supported on those components $E_i$ where the minimum in (\ref{formula:supercandidate}) is reached.
\end{defn}
\begin{rmk}
The name \emph{minimal jumping divisor} is in contrast to the \textit{maximal jumping divisor}, which is the reduced divisor supported on all the $E_i$ for which $\lambda$ is a candidate. We will not use this notion in this paper.
\end{rmk}

\begin{thm}
All jumping numbers are supercandidates.
\end{thm}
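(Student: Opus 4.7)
The plan is to show by induction on $k$ that every jumping number at most $\lambda_k$ belongs to $\{\lambda_1,\dots,\lambda_k\}$. Each supercandidate lies in the discrete set $\tfrac{1}{\mathrm{lcm}\{e_i\}}\Z$, and $\lambda_{k+1}>\lambda_k$ because $e_i^{\lambda_k}\geq\lfloor\lambda_k e_i\rfloor$ forces $(k_i+1+e_i^{\lambda_k})/e_i>\lambda_k$, so the sequence tends to infinity. Hence it suffices to show that no jumping number lies strictly between consecutive $\lambda_k$ and $\lambda_{k+1}$. The base case is immediate: $\lambda_1=\min\{(k_i+1)/e_i\}$ is the smallest candidate jumping number, which equals the log canonical threshold, hence the smallest jumping number.

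For the inductive step, fix $\lambda=\lambda_k$, let $D_\lambda=\sum_i e_i^\lambda E_i$ be the $\pi$-antieffective closure of $\lfloor\lambda F\rfloor-K_\pi$, and put $\lambda'=\lambda_{k+1}=\min\{(k_i+1+e_i^\lambda)/e_i\}$. I would show $\mathcal J(\ak^c)=\mathcal J(\ak^\lambda)$ for every $c\in[\lambda,\lambda')$, which forces no such $c\neq\lambda$ to be a jumping number. Write $D^c:=\lfloor cF\rfloor-K_\pi$; the core observation is that the $\pi$-antieffective closure of $D^c$ again equals $D_\lambda$. First, $D^c\geq\lfloor\lambda F\rfloor-K_\pi$ since $c\geq\lambda$, so the minimality property in Lemma \ref{existence_antieffective_closure} applied to $\lfloor\lambda F\rfloor-K_\pi$ gives that the closure of $D^c$ dominates $D_\lambda$. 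Second, $c<\lambda'$ translates, for each exceptional $E_i$, into $\lfloor ce_i\rfloor\leq k_i+e_i^\lambda$, and for each affine $E_i$ (where $k_i=0$ and $e_i^\lambda=\lfloor\lambda e_i\rfloor$), into $\lfloor ce_i\rfloor\leq e_i^\lambda$. Hence $D^c\leq D_\lambda$, and since $D_\lambda$ is itself $\pi$-antieffective, minimality of the closure of $D^c$ forces it to be $\leq D_\lambda$, so the two agree. Theorem \ref{Thm:unloading} then yields
\[\mathcal J(\ak^c)=\pi_*\Oc_Y(-D^c)=\pi_*\Oc_Y(-D_\lambda)=\mathcal J(\ak^\lambda)\,.\]

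The main subtlety I foresee is the uniform treatment of affine and exceptional components inside the single formula $(k_i+1+e_i^\lambda)/e_i$: one must verify it correctly predicts every value of $c$ at which some $\lfloor ce_i\rfloor$ can escape the bound imposed by $D_\lambda$, whether through an exceptional coefficient (bounded by $k_i+e_i^\lambda$) or an affine one (whose jumps occur at integer multiples of $1/e_i$, coinciding with the classical candidates along $E_i$). Once this bookkeeping is settled, the theorem reduces to the $\pi$-antieffective closure machinery of Section \ref{sec:unloading}.
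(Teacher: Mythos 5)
Your argument is correct and follows the same route as the paper: for any $c$ between consecutive supercandidates $\lambda_k$ and $\lambda_{k+1}$ one establishes $\lfloor\lambda_k F\rfloor - K_\pi\leq\lfloor cF\rfloor - K_\pi\leq D_{\lambda_k}$, and then the equivalence of the two endpoints (Theorem \ref{Thm:unloading}) sandwiches $\mathcal J(\ak^c)$ between equal ideals, exactly as in the paper's contradiction argument. One minor slip worth fixing in your justification that the supercandidates strictly increase: the inequality should read $e_i^{\lambda_k}\geq\lfloor\lambda_k e_i\rfloor - k_i$ rather than $e_i^{\lambda_k}\geq\lfloor\lambda_k e_i\rfloor$, since the closure dominates $\lfloor\lambda_k F\rfloor - K_\pi$ componentwise and not $\lfloor\lambda_k F\rfloor$ itself; the conclusion $\lambda_{k+1}>\lambda_k$ still follows.
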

\begin{proof}
%Let $\lambda$ be a supercandidate.
%By Theorem \ref{Thm:unloading}, we have $\mathcal J(X,\ak^\lambda)=\pi_*\Oc_Y(-D^\lambda)$. Hence for any $\lambda''$ such that $-D^\lambda \leq K_\pi-\lfloor \lambda'' F \rfloor \leq K_\pi-\lfloor \lambda F \rfloor$, one has\footnote{Indeed, if $D_1\leq D_2$, then $\Oc_Y(D_1)\subseteq \Oc_Y(D_2)$, and pushing forward preserves inclusion.} $\mathcal J(X,\ak^{\lambda''})=\mathcal J(X,\ak^\lambda)$. So if $\lambda'$ is the smallest jumping number larger than $\lambda$, we have \[\lambda'\geq\min\left\{\left.\frac{k_i+1+e_i^{\lambda}}{e_i} \right| i\in I \right\},\] which concludes the proof. 

%\textcolor{red}{Alternative:}
Suppose $\lambda$ is a jumping number, which is not a supercandidate. Let $\lambda'$ be the largest supercandidate smaller than $\lambda$. Note that this $\lambda'$ always exists since $\lct(X,\ak)$ is a supercandidate smaller than $\lambda$, and the supercandidates are a discrete set. Then $\lambda$ is strictly smaller then the supercandidate following $\lambda'$, i.e.,  $\lambda<\min\left\{\left.\frac{k_i+1+e_i^{\lambda'}}{e_i} \right| i\in I \right\}$, where $D_{\lambda'}=\sum_{i\in I} e_i^{\lambda'}$ is the $\pi$-antieffective closure of $\lfloor\lambda'F\rfloor-K_\pi$. But then
\[-D_{\lambda'}\leq K_\pi-\lfloor\lambda F\rfloor \leq K_\pi-\lfloor\lambda' F\rfloor\,,\]
and hence\footnote{Indeed, if $D_1\leq D_2$, then $\Oc_Y(D_1)\subseteq \Oc_Y(D_2)$, pushing forward preserves inclusion, and $D_{\lambda'}$ and $\lfloor\lambda'F\rfloor-K_\pi$ are equivalent.}
\[\mathcal J(X,\ak^\lambda)=\mathcal J(X,\ak^{\lambda'})\,,\]
contradicting the fact that $\lambda$ is a jumping number.
\end{proof}

\begin{rmk}
Note that we cannot conclude as in \cite{AAD14} that every supercandidate is a jumping number, since there is no correspondence between $\pi$-antieffective divisors and integrally closed ideals in higher dimensions. However, no examples are known where not all supercandidates are actual jumping numbers.
\end{rmk}

In the next subsection, we discuss some techniques to check whether supercandidates are jumping numbers.

\subsection{Checking supercandidates}\label{ssec:Check_supercandidates}
Once we have our supercandidates, we have to check whether they are actual jumping numbers. An important tool is the following proposition.

\begin{prop}
 If $\lambda$ is a jumping number, then $G_{\lambda}$ contributes $\lambda$. In particular, there is a divisor $G$ critically contributing $\lambda$, satisfying $G\leqslant G_{\lambda}$. Moreover, if $\lambda'$ is the supercandidate previous to $\lambda$, we have \[\pi_*\Oc_Y(K_\pi-\lfloor \lambda F\rfloor+G_\lambda)=\J(\ak^{\lambda'})\,.\]
\end{prop}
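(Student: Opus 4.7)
The plan is to prove the displayed equality first, since both the contribution of $G_\lambda$ and the existence of a critically contributing $G\leq G_\lambda$ will follow from it almost immediately. Set $N:=\lfloor \lambda F\rfloor - K_\pi - G_\lambda$, so that the identity to establish is $\pi_*\Oc_Y(-N)=\pi_*\Oc_Y(-D_{\lambda'})$. The strategy is to show that both $N$ and $\lfloor \lambda' F\rfloor-K_\pi$ have the same $\pi$-antieffective closure, namely $D_{\lambda'}$.

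First I would compare $N$ with $D_{\lambda'}$ component by component. For $E_i\leq G_\lambda$ the defining equality $\lambda e_i = k_i+1+e_i^{\lambda'}$ (which holds precisely because the minimum in \eqref{formula:supercandidate} is attained at $E_i$) forces the coefficient of $E_i$ in $N$ to be exactly $e_i^{\lambda'}$; for $E_i\not\leq G_\lambda$ the strict inequality $\lambda e_i < k_i+1+e_i^{\lambda'}$ together with integrality of $\lfloor\lambda e_i\rfloor$ yields a coefficient $\leq e_i^{\lambda'}$. Hence $N\leq D_{\lambda'}$. A symmetric calculation comparing $N$ with $\lfloor \lambda'F\rfloor - K_\pi$ gives the opposite-type inequality $N\geq \lfloor\lambda'F\rfloor-K_\pi$: on $G_\lambda$ one uses $e_i^{\lambda'}\geq \lfloor\lambda' e_i\rfloor-k_i$, which holds because $D_{\lambda'}\geq \lfloor\lambda'F\rfloor-K_\pi$ by definition of the antieffective closure, and off $G_\lambda$ one simply uses $\lambda\geq\lambda'$.

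Next, I would invoke monotonicity of the $\pi$-antieffective closure: if $D_1\leq D_2$ then $\widetilde{D_1}\leq \widetilde{D_2}$, which is immediate from the minimality property in Theorem \ref{Thm:unloading} applied to $\widetilde{D_2}$, an antieffective divisor dominating $D_1$. Applied to the two inequalities above (and using $\widetilde{D_{\lambda'}}=D_{\lambda'}$), this squeezes $\widetilde N$ between $D_{\lambda'}$ and $D_{\lambda'}$, so $\widetilde N = D_{\lambda'}$. Since $N$ and $\widetilde N$ define the same pushforward (Theorem \ref{Thm:unloading}), we conclude
\[\pi_*\Oc_Y(K_\pi-\lfloor\lambda F\rfloor+G_\lambda)=\pi_*\Oc_Y(-N)=\pi_*\Oc_Y(-D_{\lambda'})=\J(\ak^{\lambda'}).\]

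With the equality in hand, the contribution statement follows at once: by the previous theorem every jumping number is a supercandidate, so there are no jumping numbers in the interval $(\lambda',\lambda)$, and therefore $\J(\ak^{\lambda'})=\J(\ak^{\lambda-\varepsilon})\varsupsetneq\J(\ak^\lambda)$ for $0<\varepsilon\ll 1$. Thus $\pi_*\Oc_Y(K_\pi-\lfloor\lambda F\rfloor+G_\lambda)\varsupsetneq\J(\ak^\lambda)$, meaning $G_\lambda$ contributes $\lambda$. To obtain a critically contributing $G\leq G_\lambda$, I would take any $G$ minimal among the (finitely many) reduced subdivisors $0<G'\leq G_\lambda$ contributing $\lambda$; by minimality no proper subdivisor contributes, which is exactly the definition of critical contribution. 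The main obstacle is the bookkeeping in the coefficient comparison, especially being careful that the strict inequality $\lambda e_i<k_i+1+e_i^{\lambda'}$ off $G_\lambda$ combined with integrality genuinely loses a $1$; the rest is essentially formal once the unloading framework of Section \ref{sec:unloading} is in place.
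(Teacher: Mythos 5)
Your proof is correct and follows essentially the same route as the paper: you establish the same two coefficientwise inequalities $\lfloor\lambda'F\rfloor-K_\pi \leq N \leq D_{\lambda'}$ (the paper writes them as $-D_{\lambda'}\leq K_\pi-\lfloor\lambda F\rfloor+G_\lambda \leq K_\pi-\lfloor\lambda'F\rfloor$) and then invoke Theorem \ref{Thm:unloading}. The only cosmetic difference is that you pass through monotonicity of the $\pi$-antieffective closure to conclude $\widetilde N = D_{\lambda'}$, whereas the paper sandwiches the three pushforwards directly and observes the two outer ones already coincide; both are one-line consequences of the same theorem, and your concluding argument for contribution and for extracting a minimal (hence critical) $G\leq G_\lambda$ matches the paper's.
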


\begin{proof}
 Let $D_{\lambda'}=\sum_{i\in I} e_i^{\lambda'} E_i$ be the $\pi$-antieffective closure of $\lfloor\lambda' F\rfloor-K_\pi$. We claim that $K_\pi-\lfloor \lambda F\rfloor+G_\lambda\geqslant -D_{\lambda'}$. In fact, for any fixed $E_i$, one has by formula (\ref{formula:supercandidate}) that \[\lambda\leqslant\frac{k_i+1+e_i^{\lambda'}}{e_i}\,,\] so \[k_i- \lambda e_i +1\geqslant -e_i^{\lambda'}\,.\]  If $E_i\leq G_\lambda$, this is an equality. Otherwise the inequality is strict, and then, since $-e_i^{\lambda'}$ is an integer, after rounding up we get \[\lceil k_i -\lambda e_i+1\rceil \geq -e_i^{\lambda'}+1\,,\] and hence \[k_i- \lfloor\lambda e_i\rfloor \geq -e_i^{\lambda'}\,.\] We conclude that indeed $K_\pi-\lfloor \lambda F\rfloor+G_\lambda\geqslant -D_{\lambda'}$.
 
Note that also $K_\pi-\lfloor \lambda F\rfloor+G_\lambda \leq K_\pi - \lfloor\lambda'F\rfloor$, because $\lambda$ is a candidate for $G_\lambda$, and hence
\[\pi_*\Oc_Y(-D_{\lambda'})\subseteq\pi_*\Oc_Y(K_\pi-\lfloor \lambda F\rfloor+G_\lambda)\subseteq\pi_*\Oc_Y(K_\pi - \lfloor\lambda'F\rfloor)\,.\]
By Theorem \ref{Thm:unloading}, all these ideals must be the same, and equal to $\mathcal J(\ak^{\lambda'})$. Since $\lambda$ is a jumping number, we can conclude that
\[\mathcal J(\ak^\lambda)\varsubsetneq\pi_*\Oc_Y(K_\pi-\lfloor \lambda F\rfloor+G_\lambda)\,,\]
so $G_\lambda$ contributes $\lambda$.

In particular, there exists a $G\leq G_\lambda$ critically contributing $\lambda$.
\end{proof}

\begin{rmk}\label{rmk:JD_not_connected}
 Here it is important to notice that $G_\lambda$ is not necessarily connected, unlike the critically contributing divisors (see Corollary \ref{cor:connection_critical}).
\end{rmk}

So in order to check whether a supercandidate $\lambda$ is a jumping number, it suffices to check whether some $G\leq G_\lambda$ contributes $\lambda$. By Corollary \ref{cor:connection_critical}, we know that if we can find such a $G$, we can even find a connected one.

The following proposition shows that a supercandidate is a jumping number if its minimal jumping divisor has an irreducible connected component.

\begin{prop}\label{prop:irr_mjd}
If $\lambda$ is a supercandidate such that $G_\lambda$ has a connected component that is irreducible, then $\lambda$ is a jumping number.
\end{prop}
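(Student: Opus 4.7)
The plan is to apply Proposition \ref{prop:contribution_globalsections} and show that $G_\lambda$ contributes $\lambda$, which will in turn force $\lambda$ to be a jumping number. Concretely, I want to exhibit a non-zero global section of $\Oc_Y(K_\pi - \lfloor \lambda F\rfloor + G_\lambda)|_{G_\lambda}$.

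The first step is to reuse the comparison established in the proof of the preceding proposition: $K_\pi - \lfloor \lambda F\rfloor + G_\lambda \geq -D_{\lambda'}$, where $D_{\lambda'}$ is the $\pi$-antieffective closure of $\lfloor \lambda' F\rfloor - K_\pi$ and $\lambda'$ is the supercandidate preceding $\lambda$. Writing $K_\pi - \lfloor \lambda F\rfloor + G_\lambda = -D_{\lambda'} + M$, a direct coefficient check—using the defining equality $\lambda e_i = k_i + 1 + e_i^{\lambda'}$ for $E_i \leq G_\lambda$ and the strict version of \eqref{formula:supercandidate} for $E_i \not\leq G_\lambda$—shows that $M$ is effective with coefficient zero at every component of $G_\lambda$. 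Consequently $M$ and $G_\lambda$ share no component, so the tautological section of $\Oc_Y(M)$ restricts to a section of $\Oc_Y(M)|_{G_\lambda}$ that does not vanish identically on any component of $G_\lambda$, producing an injection
\[\Oc_Y(-D_{\lambda'})|_{G_\lambda} \hookrightarrow \Oc_Y(K_\pi - \lfloor \lambda F\rfloor + G_\lambda)|_{G_\lambda}\,.\]
It is therefore enough to find a non-zero global section of $\Oc_Y(-D_{\lambda'})|_{G_\lambda}$.

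Finally I invoke the hypothesis. Let $E$ be the assumed irreducible connected component of $G_\lambda$. Since $E$ is disjoint in $Y$ from $G_\lambda - E$, the splitting $G_\lambda = E \sqcup (G_\lambda - E)$ is scheme-theoretic and yields
\[H^0(G_\lambda, \Oc_Y(-D_{\lambda'})|_{G_\lambda}) = H^0(E, \Oc_Y(-D_{\lambda'})|_E) \oplus H^0(G_\lambda - E, \Oc_Y(-D_{\lambda'})|_{G_\lambda - E})\,.\]
The first summand is non-zero by the very definition of $\pi$-antieffectivity of $D_{\lambda'}$, applied to the prime exceptional divisor $E$. The main delicate point is the explicit coefficient verification that $M$ is effective and vanishes on each component of $G_\lambda$; the rest is essentially a formal manipulation with the above injection and the decomposition induced by the isolated component $E$.
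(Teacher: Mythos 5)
Your proof is correct and rests on the same two ingredients as the paper's: the decomposition $K_\pi - \lfloor\lambda F\rfloor + G_\lambda = -D_{\lambda'} + M$ with $M \geq 0$ supported away from $G_\lambda$, and the $\pi$-antieffectivity of $D_{\lambda'}$ applied to the isolated irreducible component $E$. The only (cosmetic) difference is that the paper restricts directly to $E$ and concludes that $E$ contributes $\lambda$, whereas you first set up the sheaf injection on all of $G_\lambda$, then split off the $E$-summand of $H^0$ to conclude that $G_\lambda$ contributes $\lambda$.
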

\begin{proof}
%Let $\lambda$ be a supercandidate. Denote by $\lambda'$ the supercandidate previous to $\lambda$. Then if $D_{\lambda'} = \sum_{i\in I} e_i^{\lambda'}E_i$ is the $\pi$-antieffective closure of $\lfloor \lambda'F \rfloor -K_\pi$, then \[\lambda \leq \frac{k_i +1 + e_i^{\lambda'}}{e_i}\] for all $i\in I$, with equality if and only if $E_i\leq G_\lambda$. So $k_i-\lfloor\lambda e_i\rfloor+1=-e_i^{\lambda'}$ if $E_i\leq G_\lambda$, and $k_i-\lfloor \lambda e_i\rfloor \geq -e_i^{\lambda'}$ otherwise. Hence \[K_\pi-\lfloor\lambda F\rfloor+G_\lambda = -D_{\lambda'} + \sum_{E_i\not\leq G_{\lambda}} a_i E_i\,,\]

From the proof of the previous proposition, using the same notations, we see that \[K_\pi-\lfloor\lambda F\rfloor+G_\lambda = -D_{\lambda'} + \sum_{E_i\not\leq G_{\lambda}} a_i E_i\,,\]
where $a_i\geq 0$. So for $E\leq G_{\lambda}$ we have \[\left.\left(K_\pi-\lfloor\lambda F\rfloor+G_\lambda\right)\right|_E = \left.-D_{\lambda'}\right|_E + \sum_{E_i\not\leq G_\lambda} a_i \left.E_i\right|_E\,,\]
which is effective in $\Pic E$ because $D_{\lambda'}$ is $\pi$-antieffective and $E$ is different from the $E_i$ that appear in the sum.

Now if $E$ is an irreducible connected component of $G_\lambda$, we have \[\left.\left(K_\pi-\lfloor\lambda F\rfloor+G_\lambda\right)\right|_E = \left.\left(K_\pi-\lfloor\lambda F\rfloor+E\right)\right|_E\,,\]
so the fact that this divisor is effective implies by Propositon \ref{prop:contribution_globalsections} that $\lambda$ is a jumping number contributed by $E$.
\end{proof}

So if $G_\lambda$ has an irreducible connected component, there is nothing to check anymore. This is actually a very important case, since in general many supercandidates have an irreducible minimal jumping divisor. In the other case, it would suffice to check whether $G_\lambda$ contributes $\lambda$ as a jumping number. However, in practice, this seems to be hard. Therefore, we suggest the following approach, which is more likely to work. Start by checking contribution by an irreducible $E\leq G_\lambda$. This can be done using Proposition \ref{prop:contribution_globalsections}, if we have enough understanding of $\Pic E$. If this does not give a positive answer, check the connected $G\leq G_\lambda$ consisting of two irreducible components, and proceed in this manner up to the maximal connected divisors $G\leq G_\lambda$. As soon as we find a $G\leq G_\lambda$ contributing $\lambda$, we know that $\lambda$ is a jumping number. If there is no such (connected) $G$, $\lambda$ is not a jumping number.

If $G$ is reducible it is not very clear how to check whether it contributes a supercandidate $\lambda$ as a jumping number. However, we have some tools that can be useful.

\begin{prop}\label{two_divisors_contribute}
Suppose that $\lambda$ is a supercandidate, such that $\lambda$ is a candidate for $G=E_1+E_2$. Suppose that $\Oc_Y(K_\pi-\lfloor\lambda F\rfloor + G)|_{E_i}\cong \Oc_{E_i}$ for $i\in\{1,2\}$. Then $\Oc_Y(K_\pi-\lfloor\lambda F\rfloor + G)|_{G} \cong \Oc_{G}$, and hence $\lambda$ is a jumping number contributed by $E_1+E_2$.
\end{prop}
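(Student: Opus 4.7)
The strategy is to show directly that $\mathcal L|_G \cong \Oc_G$, where $\mathcal L := \Oc_Y(K_\pi - \lfloor \lambda F \rfloor + G)$, by gluing the two given trivializations on $E_1$ and $E_2$. Once this isomorphism is established, the section $1 \in H^0(G,\Oc_G)$ yields a nonzero element of $H^0(G,\mathcal L|_G)$, and Proposition \ref{prop:contribution_globalsections} then immediately gives that $G = E_1 + E_2$ contributes $\lambda$ as a jumping number (its affineness hypothesis is met since $X = \Spec R$).

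The gluing will be set up via the standard short exact sequence associated to the scheme-theoretic union of the two smooth transverse divisors $E_1$ and $E_2$ making up $G$:
\[ 0 \to \Oc_G \to \Oc_{E_1} \oplus \Oc_{E_2} \to \Oc_{E_1 \cap E_2} \to 0, \]
where the first map is $f \mapsto (f|_{E_1}, f|_{E_2})$ and the second is $(g,h) \mapsto g|_{E_1 \cap E_2} - h|_{E_1 \cap E_2}$. Since $\mathcal L|_G$ is a line bundle and hence flat over $\Oc_G$, tensoring with it preserves exactness, producing the analogous sequence with $\mathcal L|_{(-)}$ in each slot. Using the hypothesized isomorphisms $\phi_i: \mathcal L|_{E_i} \xrightarrow{\sim} \Oc_{E_i}$, I would take the nowhere-vanishing sections $s_i := \phi_i^{-1}(1) \in H^0(E_i,\mathcal L|_{E_i})$. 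Their restrictions to $E_1 \cap E_2$ are both nowhere-vanishing, so they differ by a unit $u \in H^0(E_1 \cap E_2,\Oc^*_{E_1 \cap E_2})$. Because $R$ is local and the exceptional locus of $\pi$ is contracted to the closed point of $X$, each of $E_1$, $E_2$ and $E_1 \cap E_2$ is proper over $k$; when they are connected, global units reduce to $k^*$, so $u$ is a nonzero scalar. Rescaling $\phi_2$ by $u$ makes the two trivializations agree on $E_1 \cap E_2$, and then $(s_1,\,u\,s_2)$ lies in the kernel of the difference map, so by exactness it lifts to a global section of $\mathcal L|_G$ whose restriction to each $E_i$ is the nowhere-vanishing section $s_i$. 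This section is therefore nowhere vanishing on $G$, producing the desired trivialization $\mathcal L|_G \cong \Oc_G$.

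The main obstacle is the compatibility step: one must argue that $u$ lies in the image of $H^0(E_1,\Oc^*) \oplus H^0(E_2,\Oc^*) \to H^0(E_1 \cap E_2,\Oc^*)$, so that the two trivializations can actually be matched. In the local setup above this reduces to a connectedness statement and is straightforward when $E_1$, $E_2$ and $E_1 \cap E_2$ are connected; in more delicate combinatorial configurations (for instance, when $E_1 \cap E_2$ has several connected components meeting $E_1$ and $E_2$ in distinct ways), this step would need a separate verification. Granting the gluing, the conclusion $\lambda \in \mathrm{JN}(\ak)$ with $E_1 + E_2$ as a contributing divisor then follows at once from Proposition \ref{prop:contribution_globalsections}.
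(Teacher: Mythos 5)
Your argument is correct and follows essentially the same route as the paper's: the same short exact sequence $0 \to \Oc_G \to \Oc_{E_1}\oplus\Oc_{E_2} \to \Oc_{E_1\cap E_2} \to 0$, tensored with the line bundle, and the same matching of the two trivializations by observing that the discrepancy lies in $H^0(E_1\cap E_2,\Oc^*)=k^*$ and can be absorbed into one of the $\phi_i$. Your explicit flagging of the connectedness hypothesis on $E_1\cap E_2$ matches the corresponding hypothesis in the paper's auxiliary lemma.
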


\begin{proof}
% First assume that $E_1$ and $E_2$ don't intersect. Then \[\Oc_Y(K_\pi-\lfloor\lambda F\rfloor + G_\lambda)|_{E_i}=\Oc_Y(K_\pi-\lfloor\lambda F\rfloor + E_i)|_{E_i}\] for $i\in\{1,2\}$, so the conditions and Proposition \ref{prop:contribution_globalsections} imply that $\lambda$ is contributed as a jumping number by both $E_1$ and $E_2$, hence also by $E_1+E_2$.

By Corollary \ref{cor:connection_critical}, we can assume that $E_1$ and $E_2$ intersect transversally. Then by the following lemma, we see that \[\Oc_Y(K_\pi-\lfloor\lambda F\rfloor + G)|_{G} \cong \Oc_G,\] and hence $H^0(G,\Oc_Y(K_\pi-\lfloor\lambda F\rfloor + G)|_{G})\neq 0$. By Proposition \ref{prop:contribution_globalsections}, this means that $G$ contributes $\lambda$ as a jumping number.
\end{proof}
\begin{lemma}
Let $G=E_1\cup E_2$ be a closed reducible connected subvariety of a variety $Y$ such that $E_1$ and $E_2$ intersect transversally, and $D:=E_1\cap E_2$ is connected. Let $\mathcal L$ be a sheaf of $\Oc_Y$-modules such that $\mathcal L|_{E_i}\cong \Oc_{E_i}$ for $i\in\{1,2\}$. Then $\mathcal L|_G \cong \Oc_G$.
\end{lemma}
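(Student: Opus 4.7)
The plan is to use the Mayer--Vietoris short exact sequence on $G$. Since $E_1$ and $E_2$ meet transversally inside $Y$, the ideal of the scheme-theoretic union $G$ equals $\mathcal I_{E_1}\cdot\mathcal I_{E_2} = \mathcal I_{E_1}\cap\mathcal I_{E_2}$, so there is a short exact sequence
\[
0 \to \mathcal L|_G \to \mathcal L|_{E_1} \oplus \mathcal L|_{E_2} \to \mathcal L|_D \to 0
\]
in which the first map is $s\mapsto(s|_{E_1},s|_{E_2})$ and the second is $(s_1,s_2)\mapsto s_1|_D - s_2|_D$. (Here I use that $\mathcal L|_G$ is locally free, so tensoring the structure-sheaf Mayer--Vietoris sequence with $\mathcal L|_G$ remains exact.) Hence a trivialization of $\mathcal L|_G$ is the same as a pair of nowhere-vanishing sections $s_i\in H^0(E_i,\mathcal L|_{E_i})$ that agree on $D$.

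The hypothesis $\mathcal L|_{E_i}\cong \mathcal O_{E_i}$ supplies such nowhere-vanishing $s_i$. Their restrictions to $D$ are both nowhere-vanishing sections of $\mathcal L|_D$, so they differ by a unit $u\in H^0(D,\mathcal O_D^{*})$, say $s_1|_D = u\cdot s_2|_D$. If one can show $u$ is a scalar $c\in k^{*}$, then $s_1$ and $c\cdot s_2$ agree on $D$ and therefore glue, via the Mayer--Vietoris sequence above, to a nowhere-vanishing global section of $\mathcal L|_G$; such a section is exactly the data of an isomorphism $\mathcal O_G \xrightarrow{\sim} \mathcal L|_G$.

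The heart of the proof, and the main obstacle, is showing $u \in k^{*}$, i.e.\ $H^0(D,\mathcal O_D^{*}) = k^{*}$, equivalently $H^0(D,\mathcal O_D) = k$. This is where global geometry of $D$ must enter: in the applications of this paper, $D$ is contained in the exceptional locus of the proper birational morphism $\pi: Y\to X = \Spec R$, hence is complete over $k$. Combined with connectedness of $D$ (by hypothesis) and its reducedness (inherited from transversal intersection of $E_1$ and $E_2$), this yields $H^0(D,\mathcal O_D) = k$ and thus finishes the argument.
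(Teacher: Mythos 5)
Your argument is essentially the paper's own: both invoke the Mayer--Vietoris sequence $0\to\Oc_G\to\Oc_{E_1}\oplus\Oc_{E_2}\to\Oc_D\to 0$, tensor with $\mathcal L$, and reduce the gluing obstruction to an element of $\Gamma(D,\Oc_D^*)$ which must be a constant. You go a step further than the paper by pointing out that $\Gamma(D,\Oc_D^*)=k^*$ is an unstated assumption in the lemma and justifying it from $D$ being reduced, connected, and complete (being contracted to the closed point of $\Spec R$); the only minor slip is that $\Gamma(D,\Oc_D^*)=k^*$ is implied by, but not equivalent to, $\Gamma(D,\Oc_D)=k$ (e.g.\ $D=\mathbb A^1$ has $\Gamma(\Oc_D^*)=k^*$ yet $\Gamma(\Oc_D)\neq k$), though the direction you actually use is the correct one.
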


\begin{proof}
%Denote $D=E_1\cap E_2$.
Consider the short exact sequence
\[0\to \Oc_G \to \Oc_{E_1}\oplus\Oc_{E_2} \to \Oc_D\to 0\]
on $G$. Here we consider sheaves $\Oc_E$, where $E$ is equal to $E_1$, $E_2$ or $D$, as a sheaf on $G$ by the pushforward through the inclusion morphism $E\subset G$. After tensoring with the restriction of $\mathcal L$ to $G$, we get
\[0\to \mathcal L|_G \to \mathcal L|_{E_1}\oplus\mathcal L|_{E_2} \to \mathcal L|_D \to 0.\]
To compute the second and third term, we used the projection formula: if $\iota: E\to G$ is the inclusion, where $E$ is again equal to $E_1$, $E_2$ or $D$, then $\iota_*\Oc_E\otimes_{\Oc_G} \mathcal L|_G = \iota_*(\Oc_E\otimes_{\Oc_E} \mathcal L|_E) = \iota_*\mathcal L|_E$.

By assumption, there are isomorphisms $\phi_i: \Oc_{E_i}\to\mathcal L|_{E_i}$ for $i\in\{1,2\}$, and restricting to $D$ gives isomorphisms $\phi_i|_D:\Oc_D \to \mathcal L|_D$. Hence $\phi_1|_D^{-1}\circ \phi_2|_D$ is an automorphism of $\Oc_D$, so it corresponds to multiplication with an element $c\in\Gamma(D,\Oc_D^*)=k^*$. Composing $\phi_1$ with multiplication with this constant yields $\phi_1|_D=\phi_2|_D$. 

This means that we have a commutative diagram
\[\begin{CD}
0 @>>> \Oc_G @>>> \Oc_{E_1}\oplus\Oc_{E_2} @>>> \Oc_D @>>> 0\\
@. @. @V{\phi_1\oplus\phi_2}VV @V\phi_D VV\\
0 @>>> \mathcal L|_G @>>> \mathcal L|_{E_1}\oplus\mathcal L|_{E_2} @>>> \mathcal L|_D @>>> 0.
\end{CD}\]
Then we see that $\mathcal L|_G$ must be isomorphic to $\Oc_G$, since they are the kernel of the same morphism.
\end{proof}

\begin{rmk}

The condition in Proposition \ref{two_divisors_contribute} seems quite special, but it is the generalization of the analogous result in the two-dimensional case (\cite[Proposition 4.1]{Tuc10}). Moreover, we did not spot any other behaviour in concrete examples.
\end{rmk}

\begin{prop}\label{Necessary_condition_contribution_reduced_divisor}
If $G=E_1 + \dots + E_m$ is a connected divisor critically contributing $\lambda$ as a jumping number, then \[H^0(E_i,\Oc_Y(K_\pi - \lfloor \lambda F\rfloor + G)|_{E_i})\neq 0\] for all $i\in\{1,\dots,m\}$.
\end{prop}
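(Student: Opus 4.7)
The plan is to argue by contrapositive: if some restriction has no global sections, then we can shrink $G$ and still contribute $\lambda$, contradicting critical contribution. I would proceed as follows.

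First, dispose of the trivial case $m=1$. Here $G=E_i$ is already irreducible, so by Proposition~\ref{prop:contribution_globalsections} the hypothesis that $G$ contributes $\lambda$ (a fortiori critically) is exactly $H^0(E_i,\Oc_Y(K_\pi-\lfloor\lambda F\rfloor+G)|_{E_i})\neq 0$. So assume $m\geq 2$ and suppose, for contradiction, that there exists some index $i$ with
\[H^0(E_i,\Oc_Y(K_\pi-\lfloor\lambda F\rfloor+G)|_{E_i})=0.\]
Set $G':=G-E_i$, which is a non-zero effective reduced divisor since $m\geq 2$, and $\lambda$ is automatically a candidate jumping number for $G'$ because $0<G'<G$.

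Next I would exploit the standard restriction sequence
\[0\to \Oc_Y(K_\pi-\lfloor\lambda F\rfloor+G')\to \Oc_Y(K_\pi-\lfloor\lambda F\rfloor+G)\to \Oc_Y(K_\pi-\lfloor\lambda F\rfloor+G)|_{E_i}\to 0,\]
and push it forward through $\pi$. The key observation is that $E_i$ is mapped to an affine subscheme of $X$ (since $X=\Spec R$), so by the same argument used in the proof of Proposition~\ref{prop:contribution_globalsections} (projection formula, plus the fact that a quasi-coherent sheaf on an affine scheme vanishes iff its global sections vanish), our assumption forces
\[\pi_*\bigl(\Oc_Y(K_\pi-\lfloor\lambda F\rfloor+G)|_{E_i}\bigr)=0.\]
Therefore the pushed-forward sequence yields an equality
\[\pi_*\Oc_Y(K_\pi-\lfloor\lambda F\rfloor+G')=\pi_*\Oc_Y(K_\pi-\lfloor\lambda F\rfloor+G).\]

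Finally, since $G$ contributes $\lambda$, the right-hand side strictly contains $\J(X,\ak^\lambda)$, and hence so does the left-hand side. This says precisely that $G'$ also contributes $\lambda$, contradicting the criticality of the contribution by $G$. Thus the assumption fails for every $i$, proving the claim. The only mildly delicate point, and the step I would want to double-check most carefully, is the vanishing of the pushforward of the restriction: it relies on affineness of the image of $E_i$ together with the quasi-coherence/projection formula argument already worked out in Proposition~\ref{prop:contribution_globalsections}, but this is exactly the setup used throughout the paper, so no new ingredient is needed.
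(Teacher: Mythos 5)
Your proof is correct, and structurally parallel to the paper's, but it differs in where the controlling short exact sequence lives. The paper works intrinsically on $G$: starting from the exact sequence $0\to\Oc_{G'}(-E_1|_{G'})\to\Oc_G\to\Oc_{E_1}\to0$ (with $G'=G-E_1$), tensoring by the invertible sheaf $\Oc_Y(K_\pi-\lfloor\lambda F\rfloor+G)|_G$, and taking $H^0$, one obtains the left-exact sequence
\[0\to H^0(G',\Oc_Y(K_\pi-\lfloor\lambda F\rfloor+G')|_{G'})\to H^0(G,\cdot|_G)\to H^0(E_1,\cdot|_{E_1}).\]
Criticality forces the first term to vanish, contribution (via Proposition~\ref{prop:contribution_globalsections}) makes the middle term nonzero, and left-exactness yields the conclusion directly, with no contradiction argument and no need to return to $X$ or reinvoke the affineness/quasi-coherence lemma. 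Your version instead uses the ideal-sheaf sequence of $E_i$ on $Y$, pushes forward through $\pi$, and argues by contradiction; this is perfectly valid, but it requires running the affineness/quasi-coherence step once more to conclude $\pi_*(\Oc_Y(K_\pi-\lfloor\lambda F\rfloor+G)|_{E_i})=0$ from the vanishing of $H^0(E_i,\cdot)$. The small extra bookkeeping buys you an argument phrased entirely in terms of pushforwards and the multiplier-ideal definition of contribution, which some may find more transparent; the paper's route is marginally shorter because it stays at the level of global sections on $G$. Both proofs hinge on the same pillar: left-exactness of the relevant functor applied to the three-term sequence linking $G'$, $G$, and $E_i$, combined with the characterizations of contribution and criticality.
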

\begin{proof}
It suffices to prove the statement for $E_1$. Denote $G'=E_2+\dots+E_m$, and $D=E_1|_{G'}$. Then we have a short exact sequence
\[0\to \Oc_{G'}(-D)\to \Oc_G \to \Oc_{E_1} \to 0.\]
As before, we consider all the sheaves as sheaves on $G$ by the pushforward. After tensoring with $\Oc_Y(K_\pi -\lfloor \lambda F\rfloor + G)|_G$, we get
\begin{align*}
0 \to \Oc_Y(K_\pi - \lfloor \lambda F \rfloor + G')|_{G'} \to & \,\Oc_Y(K_\pi -\lfloor \lambda F\rfloor + G)|_G \\ &\to \Oc_Y(K_\pi -\lfloor \lambda F\rfloor + G)|_{E_1}\to 0,
\end{align*}
and taking global sections gives
\begin{align*}
0 \to H^0(G',\Oc_Y(K_\pi -\lfloor \lambda F\rfloor + G')&|_{G'})
\to H^0(G,\Oc_Y(K_\pi -\lfloor \lambda F\rfloor + G)|_G) \\ &\to H^0(E_1,\Oc_Y(K_\pi -\lfloor \lambda F\rfloor + G)|_{E_1}
\end{align*}
Since $G$ contributes critically, $H^0(G',\Oc_Y(K_\pi -\lfloor \lambda F\rfloor + G')|_{G'})= 0$ and $H^0(G,\Oc_Y(K_\pi -\lfloor \lambda F\rfloor + G)|_G) \neq 0$. Therefore \[H^0(E_1,\Oc_Y(K_\pi -\lfloor \lambda F\rfloor + G)|_{E_1})\neq 0,\] which proves the proposition.
\end{proof}

\begin{rmk}
%One can use this proposition to check wether or not $\lambda$ is a jumping number and check the contribution in the non mentioned cases.
%This proposition is not applied in the examples below, but might be useful to decide that a divisor $G\leq G_\lambda$ does not contribute $\lambda$ as a jumping number.
This proposition can be used to decide that a divisor $G\leq G_\lambda$ does not contribute $\lambda$ as a jumping number.
\end{rmk}

%\textcolor{red}{
%\begin{rmk}\label{rmk:comparison}
%One could also determine jumping numbers in a way similar to \cite[\textsection 6]{Tuc10}{\color{blue} This is not true, Tucker uses the structure provided by the dual graph and the fact that he knows how are the chains of divisors in the dual graph.}, i.e., by checking for all candidate jumping numbers whether or not they are jumping numbers. This would be done in the same way we check this for supercandidates without an irreducible connected component at the end of our algorithm. Our algorithm is faster in general, since the set of supercandidates is much smaller than the set of candidates for ideals that have many exceptional divisors in a resolution. Moreover, many supercandidates have an irreducible minimal jumping divisor, which means they do not need to be checked anymore.
%+comparison to other algorithms than Tucker.
%\end{rmk}
%}
%
%{\color{red} Tucker's algorithm relies on the particular structure of the resolution in dimension $2$: all the blowing ups are centered in points and the exceptional divisors are $\mathbb P^1$. Another possible algorithm is by brute force, i.e., since we know all the candidates check whether or not they are actually jumping numbers.}

\begin{rmk}
Our algorithm does not work in general, since we need to have enough understanding of the Picard groups of the exceptional divisors, and it is not always clear how to check the existence of global sections of sheaves on reducible varieties. However, it appears to be not realistic to develop a practical algorithm in full generality, since there is a wide range of possible singularities. %in higher dimensions, things can get very complicated% \textcolor{red}{(too vague?)}
Nonetheless, our technique seems to suffice in many situations.
\end{rmk}

\begin{rmk}\label{rmk:comparison}
%{\color{red}Another possible algorithm is by brute force: compute all the candidates and checking whether they are jumping numbers or not (checking which divisors can contribute to them). However, this is not a realistic approach because for examples with several divisors, the set of candidate jumping numbers is much bigger than the set of supercandidates, and in the second case, one can use Proposition \ref{prop:irr_mjd} to avoid have to check all candidates of only one divisor as it happens if one tries to compute it by brute force.

%On the other hand, algorithms as \cite{BL10} and \cite{Shi11} have to pass by computations of generalized Bernstein-Sato polynomials. Computations that are not easy in general.

One could also determine the jumping numbers by computing all the candidate jumping numbers, and checking whether they are jumping numbers or not. This can be done by checking whether they are contributed by some divisor for which they are a candidate. However, our algorithm is more efficient in general, since if there are many exceptional divisors in a resolution, the set of candidate jumping numbers is much bigger than the set of supercandidates. Moreover, the minimal jumping divisor can be much smaller than the maximal one, which reduces the amount of possible contributing divisors. Also, Proposition \ref{prop:irr_mjd} implies that several supercandidates do not need to be checked anymore.

On the other hand, algorithms as \cite{BL10} and \cite{Shi11} have to pass by computations of generalized Bernstein-Sato polynomials, which is not easy in general.

%}
\end{rmk}

\section{Examples}\label{sec:examples}
%\textcolor{red}{Add comments on what existing algorithms do with these examples (probably not much).}

\subsection{Example 1}
Let $D$ be the germ of the surface given by $x(yz-x^4)(x^4+y^2-2yz)+yz^4-y^5=0$ in the local ring at the origin in $\mathbb A^3$. We consider an embedded resolution given by six point blow-ups. We start by blowing up at the origin. Then we blow up at the singular point of the strict transform of $D$. In a third step we blow up at the singular point of the strict transform of $D$ on the intersection of $E_1$ and $E_2$, followed by the intersection point of $E_2$, $E_3$ and the strict transform of $D$. The last step consists of blowing up at the two remaining singular points.

Denoting the resolution by $\pi:Y\to X$, we have \[K_\pi = 2E_1 + 4E_2 + 8E_3 + 14E_4 + 6E_5 + 6E_6\] and
\[F=\pi^*D = D_{aff} + 5E_1 + 9E_2 + 16E_3 + 27E_4 + 11E_5 + 11E_6.\] All the exceptional divisors are projective planes, blown up at at most 4 additional points, so one can completely understand their Picard groups and effective cones. Applying the algorithm of section \ref{sec:algorithm}, we find the following supercandidates and their minimal jumping divisors.

\vspace{2mm}

\begin{center}
{\renewcommand{\arraystretch}{1.3}
\begin{tabular}{|c|c|}
\hline
$\lambda$ & $G_\lambda$\\
\hline
$\frac59$ & $E_2+E_4$\\
$\frac23$ & $E_2+E_4$\\
$\frac{20}{27}$ & $E_4$\\
$\frac{7}{9}$ & $E_2+E_4$\\
$\frac{23}{27}$ & $E_4$\\
$\frac89$ & $E_2+E_4$\\
$\frac{25}{27}$ & $E_4$\\
$\frac{26}{27}$ & $E_4$\\
1 & $E_1+E_2+E_3+E_4+E_5+E_6+D_{aff}$.\\
\hline
\end{tabular}
}
\end{center}

\vspace{2mm}

We verify one supercandidate to illustrate the algorithm. It is easy to see that $\lct(X,D) = \frac59$. We have to compute the $\pi$-antieffective closure of $\left\lfloor\frac59F\right\rfloor-K_\pi = E_2+E_4$.

First, note that $E_1$ is isomorphic to $\mathbb P^2$ blown up at two additional points, the second one infinitely near to the first one. As generators of the Picard group, we denote the pullback of a line by $\ell$, the pullback of the first exceptional curve by $e_1$, and the second exceptional curve by $e_2$. Then $-(E_2+E_4)|_{E_1} = -e_1+e_2$, which is not effective. Hence, in the next step of the unloading, we consider $E_1+E_2+E_4$.

Now we see that $E_3$ is isomorphic to $\mathbb P^2$ blown up at one point. If we denote the generators of the Picard group by $\ell$ (pullback of a line) and $e$ (the exceptional curve), we find $-(E_1+E_2+E_4)|_{E_3} = -2\ell$. Since $E_3|_{E_3} = -\ell-e$, we see that we need to add $E_3$ twice to achieve an effective divisor on $E_3$. So in the next step, we consider $E_1+E_2+2E_3+E_4$.

Continuing in this manner, we add $E_4$ twice, $E_5$ and $E_6$, and we can check that the obtained divisor $E_1+E_2+2E_3+3E_4+E_5+E_6$ is $\pi$-antieffective. This implies that the next supercandidate is \[\min\left\{\frac{0+1+0}{1},\frac{2+1+1}{5},\frac{4+1+1}{9},\frac{8+1+2}{16},\frac{14+1+3}{27},\frac{6+1+1}{11} \right\}=\frac69=\frac{18}{27}=\frac23,\]
and this minimum is achieved for the terms coming from $E_2$ and $E_4$.

\vspace{2mm}

All the supercandidates with irreducible minimal jumping divisor are jumping numbers contributed by $E_4$, by Proposition \ref{prop:irr_mjd}. For the numbers $\frac79$ and $\frac89$, one can see that $(K_\pi -\lfloor\lambda F\rfloor+E_2)|_{E_2}$ is effective, so these numbers are jumping numbers contributed by $E_2$. One can check that the supercandidates $\lambda=\frac59$ and $\lambda=\frac23$ are not contributed by a single exceptional divisor. However, for these numbers, we have $(K_\pi -\lfloor\lambda F\rfloor+E_2+E_4)|_{E_i}=0$ in $\Pic E_i$ for $i\in\{2,4\}$, so, by Proposition \ref{two_divisors_contribute}, they are jumping numbers contributed by $E_2+E_4$. Note that in fact we didn't need to check whether $\frac59$ is a jumping number, since this is the log canonical threshold. Finally, since we are in the case of a divisor, $1$ is always a jumping number.

%\textcolor{red}{Since this is quite a simple example, where almost all of the candidate jumping numbers between 0 and 1 are supercandidates, our algorithm is actually not much faster than the algorithm described in Remark \ref{rmk:comparison}. This example serves merely as an illustration of Proposition \ref{two_divisors_contribute}. The following example has more exceptional divisors, so our algorithm will be faster there.}
%\textcolor{red}{The algorithm of \cite{BL10}, which is implemented in Macaulay2, did not give a result within several days.}
Existing implemented algorithms (\cite{BL10}) did not give a result after several days of computation.%computing.

\subsection{Example 2}\label{example:elliptic_curve}
In this example we show that we don't need to understand the Picard groups and effective cones of all exceptional divisors completely in order to compute the jumping numbers.

Take $X =\Spec \mathbb C[x,y,z]_{(x,y,z)}$ and $D$ the zero locus of $$(x^d+y^d+z^d)^2+g(x,y,z),$$ with $d\geq 3$ and $g(x,y,z)$ a generic homogeneous polynomial of degree $2d+1$.

In the first step of the resolution, we blow up at the origin of $X$. We denote the exceptional divisor by $E_1$. Step 2 consists in blowing up at $k=d(2d+1)$ singular points of the strict transform of $D$; the exceptional divisors are denoted by $E^p_i$. In step 3, we blow up at $C$, the intersection of $E_1$ and $D_{aff}$. This is a curve of genus $g=\frac{1}{2}(d-1)(d-2)$. The exceptional divisor is denoted by $E_2$. The final step of the resolution is blowing up at the intersection of $E_1$, $E_2$ and $D_{aff}$, which is also isomorphic to $C$. We denote the exceptional divisor by $E_3$. We denote the composition of these blow-ups by $\pi:Y\to X$. We have
\begin{align*}
\pi^*D &= D_{aff} + 2dE_1 + (2d+2)\sum_{i=1}^k E_i^p + (2d+1)E_2 + (4d+2)E_3,\\
K_\pi &= 2E_1 + 4\sum_{i=1}^k E_i^p + 3E_2 + 6E_3.
\end{align*}

In order to compute the jumping numbers using the unloading procedure, we need to know the mutual intersections of all components of $\pi^*D$ in the Picard groups of the exceptional divisors, as well as the self-intersections of the exceptional divisors, and we need to determine when a divisor of the form
\[-\left.\left(a_1E_1+a_p\sum_{i=1}^kE^p_i+a_2E_2+a_3E_3\right)\right|_{E}\]
is effective in $\Pic E$, where $E$ varies over all the exceptional divisors. (Note that we take one coefficient for all the $E^p_i$, since everything we will encounter will be symmetrical in the $E^p_i$.) In order to do this, we have to know more about the exceptional divisors. The divisor $E_1\subset Y$ is a projective plane, blown up at $k=d(2d+1)$ additional points. The $E_i^p$ are projective planes, blown up at two additional points, the second center lying on the exceptional curve of the first blow-up. The divisors $E_2$ and $E_3$ are ruled surfaces over $C$.

We show how we can obtain conditions on the $a_j$ from the information on $E_3$. The other exceptional divisors are treated similarly.

Since $E_3$ is a ruled surface over $C$, its Picard group equals $\mathbb Z\oplus p^*(\Pic C)$, where $\mathbb Z$ is generated by a section, say $C'$. Here $p$ denotes the canonical morphism $E_3\to C$. It is not obvious to give a complete description of the effective cone%\textcolor{red}{\footnote{\textcolor{red}{For the numerical effective cone, one can consult \cite[\textsection V.2]{Har77}, \cite[\textsection 1.9]{Deb01}, \cite[\textsection 1.5]{LazI04} or \cite[\textsection 3]{Ros02}, but we are interested in linear equivalence classes containing effective curves.}}}
, but we know that $aC' + p^*\mathfrak d$ is effective if $a\geq0$ and $\mathfrak d$ is effective, or if $a\geq0$ and $\text{\rm deg }\mathfrak d\geq d-2$ (in fact the first one suffices for our purposes), and that it is not effective if $a<0$. It will turn out that this is enough information.
We denote
\begin{align*}
E_1|_{E_3} &= C_1,\\
E_i^p|_{E_3} &= f_i,\\
E_2|_{E_3} &= C_2,\\
D_{aff}|_{E_3} &= C_0,
\end{align*}
where all of the $C_j$'s are sections, and the $f_i$ are fibres. From \cite[Proposition 2.1]{Vey91} we know that
\begin{align*}(4d+2)E_3|_{E_3} &= -D_{aff}|_{E_3} - 2dE_1|_{E_3} - (2d+2)\sum_{i=1}^k E_i^p|_{E_3} - (2d+1)E_2|_{E_3}\\
&= -C_0 - 2dC_1 - (2d+2)\sum_{i=1}^k f_i - (2d+1)C_2.
\end{align*}

In order to have more information about the self-intersection of $E_3$, we compute the pullbacks of some additional divisors. First, if $D_1$ is the divisor given by the zero locus of $x^d+y^d+z^d$, we see that $\pi^*D=D_{1,aff} + dE_1+(d+1)E_2 + (d+1)\sum_{i=1}^k E_i^p + (2d+1)E_3$. Analogously, for a generic plane $H$ through the origin, we have $\pi^*H = H_{aff} + E_1+E_2+\sum_{i=1}^k E_i^p + 2E_3$. Since $D_{1,aff}$ does not intersect $E_3$, and $H_{aff}$ intersects $E_3$ in $d$ fibers, we find
\begin{align*}
(2d+1)E_3|_{E_3} &= -dC_1 - (d+1)C_2 - (d+1)\sum_{i=1}^k f_i,\\
2E_3|_{E_3} &= -C_1-C_2-\sum_{i=1}^k f_i - \sum_{j=1}^d f_j',
\end{align*}
where the $f_j'$ are fibres. One easily verifies that $\sum_{i=1}^k f_i - (2d+1)\sum_{j=1}^d f_j'=0$ in $\Pic E_3$, since it is the pullback of the principal divisor on $C$ defined by the rational function $\frac{g}{\ell^{2d+1}}$, where $\ell$ is the linear polynomial defining $H$. (We considered the curve $C$ embedded in the projective plane $E_1$, before the other blow-ups, with its standard coordinates.)
From this observation, together with the equalities above, one can conclude that $C_0=C_1=C_2$ in $\Pic E_3$ and hence that $E_3|_{E_3} + C_1=-(d+1)\sum_{j=1}^d f_j'$ is the pullback of adivisor of degree $-d(d+1)$.

So the remarks about the effective divisors on $E_3$ above yield that a divisor $$-\left.\left(a_1E_1+a_p\sum_{i=1}^kE^p_i+a_2E_2+a_3E_3\right)\right|_{E_3}$$ is effective if $a_3\geq a_1+a_2$ and $(d+1)a_3> (2d+1)a_p$, and that it is not effective if $a_3<a_1+a_2$.

A similar analysis on the other divisors yields the following conclusions. For $E_1$ we obtain sufficient conditions
\begin{equation*}\left\{\begin{aligned}
(2d+1)a_1 &\geq da_3,\\
(d+1)a_1 &\geq da_p,
\end{aligned}\right.
\end{equation*}
the first of which is a necessary condition.

For the $E_i^p$ we obtain sufficient and necessary conditions
\begin{equation*}\left\{\begin{aligned}
a_p &\geq a_1,\\
a_p &\geq a_2,\\
2a_p &\geq a_3.
\end{aligned}\right.
\end{equation*}

The exceptional divisor $E_2$ is also a ruled surface. Here we have sufficient conditions
\begin{equation*}\left\{\begin{aligned}
&2a_2 \geq a_3,\\
&2(d+1)a_2 \geq (2d+1)a_p,
\end{aligned}\right.
\end{equation*}
the first of which is necessary.

All together, this gives the following set of sufficient conditions for a divisor \[\left(a_1E_1+a_p\sum_{i=1}^kE^p_i+a_2E_2+a_3E_3\right)\] to be $\pi$-antieffective:
\begin{equation*}\left\{\begin{aligned}
a_1 &\leq \frac{1}{2}a_3\leq a_2\leq a_p\\
a_3 &\geq a_1+a_2\\
a_3&\leq 2a_1 + \frac{1}{d}a_1\\
a_p &\leq a_1 + \frac{1}{d} a_1\\
a_p &< a_2 + \frac{1}{2d+1}a_2\\
2a_p &< a_3 + \frac{1}{2d+1}a_3.
\end{aligned}\right.
\end{equation*}
The first three inequalities are necessary.

\begin{prop}
In this example, the set of jumping numbers in $(0,1]$ is
\[A:=A_1\cup A_2\cup A_3,\]
where
\begin{align*}
A_1 &= \left\{\left.\frac{n}{2d}\,\right| 3\leq n< d\right\},\\
A_2 &= \left\{\left.\frac{2n+1}{4d+2}\,\right| d\leq n\leq 2d\right\},\\
A_3 &= \left\{\left.\frac{n}{2d}\,\right| d+3\leq n\leq 2d\right\}.
\end{align*}
\end{prop}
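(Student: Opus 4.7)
I would carry out the algorithm of Section~\ref{sec:algorithm} in three parts, combining the explicit $\pi$-antieffectivity criteria derived earlier in this example with the contribution tests of Section~\ref{ssec:Check_supercandidates}.

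First, for every $\lambda\in A$ I would list the components of $\pi^*D$ for which $\lambda$ is a candidate jumping number. For $\lambda=n/(2d)\in A_1\cup A_3$ with $n<2d$, divisibility against the other multiplicities $2d+1$, $2d+2$, $4d+2$ (using $\gcd(d,2d+1)=\gcd(d+1,2d+1)=1$ together with $3\leq n<2d$) rules out every component except $E_1$. For $\lambda=(2n+1)/(4d+2)\in A_2$ with $n>d$, the odd numerator together with an analogous divisibility analysis singles out $E_3$. At $\lambda=1$, $D_{aff}$ is a candidate. Hence whenever such a $\lambda$ turns out to be a supercandidate, $G_\lambda$ has a unique irreducible exceptional component (or contains $D_{aff}$ at $\lambda=1$), and Proposition~\ref{prop:irr_mjd} promotes it to an actual jumping number.

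Next, I would iterate formula~(\ref{formula:supercandidate}) starting from $\lct(X,D)=3/(2d)$ for $d\geq 4$ (the case $d=3$ starts at $1/2$). Using the necessary and sufficient inequalities for $\pi$-antieffectivity listed earlier, together with the symmetry of $\lfloor\lambda F\rfloor-K_\pi$ in the $E_i^p$, the $\pi$-antieffective closure $D_\lambda$ admits a closed-form expression at each step. By induction on the position in the supercandidate sequence, one verifies that this closed form yields exactly the next element of $A$ as the next supercandidate, with the minimum in (\ref{formula:supercandidate}) attained at the component identified in the candidate analysis above.

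The transition value $\lambda=1/2=(2d+1)/(4d+2)$ (the case $n=d$ of $A_2$) is the one point where several components simultaneously achieve the minimum in (\ref{formula:supercandidate}); here I would check directly via Proposition~\ref{prop:contribution_globalsections} on the ruled surface $E_3$ that $E_3$ critically contributes, using the effectivity data already collected, or else apply Proposition~\ref{two_divisors_contribute} to $E_1+E_3$. Finally $\lambda=1$ is a jumping number because it is an integer. The main obstacle I expect is the bookkeeping in the unloading induction, especially around the two transitions $A_1\to A_2$ and $A_2\to A_3$; the ruled-surface geometry of $E_2$ and $E_3$ intervenes only through the effectivity inequalities already collected, so no further Picard-group input should be needed.
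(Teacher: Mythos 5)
Your proposal follows essentially the same route as the paper's proof: compute the log canonical threshold, iterate formula~(\ref{formula:supercandidate}) using the explicit $\pi$-antieffectivity inequalities to run the unloading induction and show the supercandidate sequence is exactly $A$, then promote supercandidates to jumping numbers via Proposition~\ref{prop:irr_mjd} for the irreducible cases, a direct check on $E_3$ (or Proposition~\ref{two_divisors_contribute} on $E_1+E_3$) at $\lambda=\frac12$, and integrality at $\lambda=1$. Your divisibility pre-analysis identifying the unique candidate component for each $\lambda\in A$ is a small clarifying shortcut not spelled out in the paper --- it pins down $G_\lambda$ a priori rather than reading it off the unloading --- but the substance of the argument is the same.
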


\begin{proof}
Since $d\geq 3$, the log canonical threshold is
\[\lambda_1=\min\left\{\frac{3}{2d},\frac{5}{2d+2},\frac{4}{2d+1},\frac{7}{4d+2},1\right\}=\frac{3}{2d}.\]
This is indeed the minimal value in $A$. Moreover, $G_{\lambda_1}=E_1$ if $d>3$, and $E_1+E_3$ if $d=3$, in which case $\lambda_1=\frac{1}{2}$.

The rest of the proof goes by inductively computing the next supercandidates. Given a supercandidate $\lambda$ in one of the three sets, we compute $\lfloor \lambda \pi^*D\rfloor - K_\pi$, and we unload using the set of necessary and sufficient conditions on the coefficients given above. Then the formula (\ref{formula:supercandidate}) gives the next supercandidate.

We treat the case where $\lambda\in A_1$ to show how the proof works. The other cases are similar. Suppose we know that $\lambda=\frac{n}{2d}$ is a supercandidate, where $3\leq n < d$. We have
\[\lfloor \lambda \pi^*D\rfloor-K_\pi = (n-2)E_1+(n-4)\sum_{i=1}^k E^p_i + (n-3)E_2+(2n-6)E_3.\]
Using the conditions for $\pi$-antieffective divisors described above, we find that the $\pi$-antieffective closure is
\[(n-2)E_1+(n-2)\sum_{i=1}^k E^p_i + (n-2)E_2+(2n-4)E_3.\]
Then the next supercandidate is
\[\lambda'=\min\left\{\frac{n+1}{2d},\frac{n+3}{2d+2},\frac{n+2}{2d+1},\frac{2n+3}{4d+2},1\right\}=\frac{n+1}{2d},\]
which is indeed the next value in $A$.

For the minimal jumping divisor, we have
\[G_\lambda = \begin{cases}
E_1+E_3\text{ if }\lambda=\frac{1}{2}\text{ or }\lambda=1,\\
E_1\text{ if }\lambda\in A_1\text{ or }A_3,\\
E_3\text{ otherwise}.
\end{cases}\]
By Proposition \ref{prop:irr_mjd} and checking that $\left.\left(K_\pi-\left\lfloor\frac12\pi^*D\right\rfloor + E_3\right)\right|_{E_3}$ is effective in $\Pic E_3$, all supercandidates are jumping numbers contributed by $E_1$ or $E_3$.
%One can check that $(K_\pi-\lfloor \lambda \pi^*D\rfloor + E_3)|_{E_3}$ is effective in all cases, so all supercandidates are jumping numbers contributed by $E_3$.
\end{proof}

\addcontentsline{toc}{chapter}{Bibliography}
\bibliographystyle{amsalpha}
%\bibliography{Bibliography}

\end{document}